\setlist{itemsep=3pt}
\newtheorem{prop}{Proposition}
\newtheorem{theo}[prop]{Theorem}
\newtheorem{lemm}[prop]{Lemma}
\newtheorem{coro}[prop]{Corollary}
\theoremstyle{definition}
\newtheorem{rema}[prop]{Remark}
\newtheorem{conj}{Conjecture}
\newcommand{\RR}{\mathbb{R}}
\renewcommand{\SS}{\mathbb{S}}
\newcommand{\cE}{\mathcal E}
\newcommand{\bangle}[1]{\left\langle #1 \right\rangle}
\let\oldmarginpar\marginpar
\renewcommand\marginpar[1]{\-\oldmarginpar[\raggedleft\footnotesize #1]%
{\raggedright\footnotesize #1}}
\DeclareMathOperator{\Id}{Id}
\DeclareMathOperator{\Div}{div}
\newcommand{\eps}{\varepsilon}
\title{On minimizers in the liquid drop model}
\author{Otis Chodosh}
\address{OC: Department of Mathematics, Bldg.\ 380, Stanford University, Stanford, CA 94305, USA}
\email{ochodosh@stanford.edu}
\author{Ian Ruohoniemi}
\address{IR: Department of Mathematics, University of Minnesota-Twin Cities, Minneapolis, MN,
55455, USA.}
\email{ruoho021@umn.edu}
\begin{document}
\maketitle

\begin{abstract}
We prove that round balls of volume $\leq 1$ uniquely minimize in Gamow's liquid drop model. 
\end{abstract}

\section{Introduction}  
For $\Omega\subset \RR^3$ measurable we define the liquid drop functional
\begin{equation}\label{eq:defn-E}
\cE(\Omega) : = P(\Omega) + D(\Omega). 
\end{equation}
where\begin{equation}\label{eq:defn-D}
D(\Omega) : = \frac 12 \iint_{\Omega\times\Omega} \frac{dxdy}{|x-y|}
\end{equation}
is the Coulomb energy and $P(\Omega) = \sup\left\{ \int_\Omega \Div X : X \in C^1_c(\RR^3;\RR^3), |X|\leq 1\right\}$ is the perimeter of $\Omega$. 

The functional \eqref{eq:defn-E} was introduced by Gamow in 1928 as a model for the nucleus of an atom \cite{Gamow}. The minimization problem
\begin{equation}\label{eq:profile}
E(V) : = \inf_{|\Omega| = V} \cE(\Omega). 
\end{equation}
has recently received considerable attention for its connections with various problems in mathematical physics. We refer to the recent survey \cite{CMT:Notices} for further discussion. 

One basic feature of \eqref{eq:defn-E} is the fact that the perimeter $P(\Omega)$ is minimized at a round ball (by the isoperimetric inequality) while the Coulomb term $D(\Omega)$ is maximized at a round ball (by Riesz rearrangement \cite{Riesz}). Thus, the two terms are in competition in the minimization problem \eqref{eq:profile}. A well-known conjecture suggests that the perimeter completely dominates in \eqref{eq:profile} until the Coulomb energy becomes strong enough to prevent the existence of minimizers.  

To state the conjecture precisely, write $B_1\subset \RR^3$ for the unit ball and set
\begin{equation}\label{eq:mass-to-split}
V_* : = \frac{2-2^{\frac 23}}{2^{\frac23}-1} \frac{|B_1| P(\partial B_1)}{\frac 12 \iint_{B_1\times B_1}|x-y|^{-1} dxdy} = 5\frac{2-2^{\frac 23}}{2^{\frac23}-1} \approx 3.51.
\end{equation}
The significance of \eqref{eq:mass-to-split} is that for $V > V_*$, the configuration of two balls of volume $V/2$ placed infinitely far apart has less energy than a single ball of volume $V$. 

We then have (cf.\ \cite{ChoksiPeletier}):
\begin{conj}
\label{conj:min-LD}\,
\begin{enumerate}[label=(\alph*)]
\item For $V\leq V_*$, the round ball of volume $V$ uniquely minimizes $\cE(\cdot)$ among measurable sets $\Omega\subset \RR^3$ with $|\Omega| = V$. 
\item For $V>V_*$ no minimizer exists.
\end{enumerate}
\end{conj}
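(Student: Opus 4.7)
The plan is to address the two parts by different techniques. Part (b), non-existence for $V > V_*$, I would handle via a slicing and translation argument: given any candidate $\Omega$ of volume $V > V_*$, cut by a generic hyperplane and send one piece to infinity. By the definition \eqref{eq:mass-to-split} of $V_*$, the two-piece configuration has strictly smaller energy than any single ball of the same total volume, and a careful optimization over the cut produces a competitor with strictly smaller energy than the ball. Upgrading this from ``the ball is not the minimizer'' to ``no minimizer exists'' requires a concentration-compactness analysis on a minimizing sequence to rule out vanishing and to identify the splitting alternative, following the approach of Kn\"upfer--Muratov / Frank--Killip--Nam.

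Part (a) is the heart of the matter, and my plan is a Fuglede-type selection principle. Assume for contradiction there is a non-ball minimizer $\Omega$ of volume $V \leq V_*$. Because the Coulomb potential is bounded, $\Omega$ is a $(\Lambda, r_0)$-minimizer of perimeter, so $\partial \Omega$ is $C^{1,\alpha}$; together with the quantitative isoperimetric inequality applied to $\cE(\Omega) \leq \cE(B_V)$, this forces $\partial \Omega$ to be $C^{1,\alpha}$-close to a round sphere after translation. Parameterizing $\partial \Omega$ as a normal graph $r(\omega) = r_V(1 + u(\omega))$ over $\partial B_V$, I would expand
\[
\cE(\Omega) - \cE(B_V) = \sum_{k \geq 0} \lambda_k(V)\, \|u_k\|_{L^2(S^2)}^2 + \text{(higher-order remainder)},
\]
where $u_k$ is the $k$-th spherical harmonic mode of $u$. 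The volume constraint kills $k=0$, translation invariance handles $k=1$, and for $k \geq 2$ the eigenvalues $\lambda_k(V)$ are computable from the second variation of perimeter on $\partial B_V$ and the diagonalized Coulomb quadratic form on $S^2$. Positivity of every $\lambda_k(V)$ for $k \geq 2$ gives $\cE(\Omega) > \cE(B_V)$ and hence the contradiction; the positivity threshold works out to the conjectural $V_*$.

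The main obstacle will be absorbing the higher-order remainder. The quadratic form provides $\|u\|_{H^1}^2$ coercivity from perimeter but only $\|u\|_{H^{1/2}}^2$ coercivity from Coulomb, while the cubic terms in the nonlocal expansion are naively only controlled in stronger $L^\infty$-type norms; bridging this gap is delicate and is where any practical restriction on $V$ (e.g.\ $V \leq 1$ rather than $V \leq V_*$) is most likely to enter. Indeed, the quadratic form must beat the cubic error with a quantitative margin, and as $V \nearrow V_*$ the low modes $k = 2$ (and beyond) become marginal, shrinking the margin to zero. Pushing the argument past this linearization barrier --- and thereby reaching the full conjectural range --- appears to require either a non-perturbative convexity or rearrangement input that goes beyond second-order expansion, or a substantially sharpened global stability estimate for the nonlocal functional.
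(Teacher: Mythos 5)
The statement you are proving is Conjecture~\ref{conj:min-LD}, which remains open; the paper does not prove it. What the paper actually establishes is Theorem~\ref{theo:roundness}, a partial result confirming part (a) only for $V \leq 1 \approx 0.28\,V_*$, and by a method completely unlike the one you sketch. Your plan for part (a) is a Fuglede-type spectral expansion around the ball, which is precisely the ``small-volume / isoperimetric-dominated'' strategy of Kn\"upfer--Muratov, Bonacini--Cristoferi, Julin, Figalli--Fusco--Maggi--Millot, etc.\ that the introduction explicitly contrasts with the paper's contribution. As you yourself diagnose, that approach degenerates: the quadratic form loses its margin over the cubic remainder well before $V_*$, and more fundamentally, the quantitative isoperimetric step (``$\cE(\Omega)\leq\cE(B_V)$ forces $C^{1,\alpha}$-closeness to the ball'') is circular --- it presupposes that the ball is the minimizer, which is what you are trying to prove --- unless one restricts to a continuity/bifurcation argument off the known small-volume regime. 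You would thus recover, at best, the previously known perturbative result with no explicit volume threshold, not the conjecture and not even Theorem~\ref{theo:roundness}.

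The paper's route to the partial result is genuinely non-perturbative and worth contrasting with yours. The key new ingredient is the sharp integro-geometric inequality $|\Omega|^2 \leq \tfrac{1}{12\pi} P(\Omega) D^\partial(\Omega)$ (Theorem~\ref{theo:est-for-R3-LD}), proved via Santal\'o's formula and H\"older, with equality exactly for balls. Combined with the first-variation identity (Lemma~\ref{lemm:first-var}), which rewrites the Euler--Lagrange equation so that $\int_{\partial\Omega} H$ and $D^\partial(\Omega)$ appear, and Huisken's Minkowski inequality $(16\pi P(\Omega))^{1/2} \leq \int_{\partial\Omega} H$ for outer-minimizing sets, one gets a closed one-variable inequality in $x = P(\Omega)$ whose only solution in the allowed range is $x = 4\pi R^2$, i.e.\ the ball. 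The outer-minimizing hypothesis is supplied conditionally by Proposition~\ref{prop:outer-min}, and the conditionality is resolved by a continuity argument on the threshold volume $V_B$ (Lemma~\ref{lemm:cont-method}). None of this involves a graph parametrization, a spectral decomposition of the second variation, or absorbing a nonlocal cubic remainder; the ``competition'' between $P$ and $D$ is handled at the level of exact scalar inequalities rather than perturbatively. Your sketch of part (b) is also only a heuristic: the slicing argument shows the ball is not the minimizer for $V > V_*$, and the known nonexistence results (Kn\"upfer--Muratov, Lu--Otto, Frank--Killip--Nam) give $V_N \leq 8 \approx 2.38\,V_*$, but nonexistence on the full range $V > V_*$ is open, as is part (a) beyond roughly $V \leq 1$.
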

Our main result confirms (a) for the range $V\leq 0.28V_*$:
\begin{theo}\label{theo:roundness}
Round balls $B_R$ with $|B_R| \leq 1$ uniquely minimize $\cE(\cdot)$ among measurable sets $\Omega\subset\RR^3$ with $|\Omega| = |B_R|$. 
\end{theo}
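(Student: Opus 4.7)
The plan is a two-stage argument. In the first stage, I would reduce to the \emph{nearly spherical} regime, showing that any minimizer $\Omega$ with $|\Omega| \leq 1$ is, after translation, a small $C^{1,\alpha}$ normal graph over the ball $B_R$ of the same volume. In the second stage, a Fuglede-type expansion of $\cE$ around the ball in spherical harmonics yields strict positivity of the second variation, forcing $\Omega = B_R$.

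For Stage 1, existence of a minimizer for $V \leq 1$ is available from the known concentration-compactness results, and any minimizer is a $(\Lambda, r_0)$-minimizer of perimeter, hence its boundary has uniform $C^{1,1/2}$ regularity. To promote this to actual closeness to a ball, I would argue by contradiction in the spirit of the Cicalese--Leonardi selection principle: among hypothetical counterexamples, select one (nearly) minimizing the Fraenkel asymmetry, and use the uniform regularity together with compactness to extract from a minimizing sequence a limit that is either a ball (contradicting positive asymmetry) or a nearly spherical non-ball minimizer, with the latter case feeding into Stage 2.

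For Stage 2, write $\partial \Omega = \{R(1 + u(\omega))\omega : \omega \in S^2\}$, translating so that the $k = 1$ spherical harmonic modes of $u$ vanish; the volume constraint then pins down the $k = 0$ mode to leading order. The resulting expansion
\begin{equation*}
\cE(\Omega) - \cE(B_R) \;=\; \sum_{k \geq 2} \Lambda_k(R) \sum_{|m| \leq k} |\hat u_{k,m}|^2 \;+\; O\bigl(\|u\|_{C^1}^3\bigr)
\end{equation*}
has eigenvalues $\Lambda_k(R)$ splitting into a positive perimeter contribution $\tfrac{(k-1)(k+2)}{R}$ and an explicit, destabilizing Coulomb contribution scaling like $R^3$. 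The bound $|B_R| \leq 1$ should be exactly what is required to ensure $\inf_{k \geq 2} \Lambda_k(R) > 0$, with the $k = 2$ mode most critical, so for sufficiently small $u$ one gets $\cE(\Omega) > \cE(B_R)$ unless $u \equiv 0$.

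The main obstacle is Stage 1. The crude approach of combining the Fusco--Maggi--Pratelli quantitative isoperimetric inequality with the bound $D(B_R) - D(\Omega) \leq D(B_R) \sim V^{5/3}$ only yields $\alpha(\Omega)^2 \lesssim V$, which is useless at $V = 1$. Reaching $V \leq 1$ therefore seems to demand either a direct selection argument extracting a nearly spherical counterexample, or a careful bootstrap using the Euler--Lagrange equation $H_{\partial \Omega} + v_\Omega = \lambda$ (with $v_\Omega$ the Coulomb potential) together with the uniform boundary regularity to push $\Omega$ into the regime where Stage 2 applies. Matching the constants between the two stages precisely at the threshold $|B_R| = 1$ is what makes the statement sharp, and the quantitative closure of Stage 1 is where I expect most of the work to lie.
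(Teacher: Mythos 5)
Your plan reproduces, in broad strokes, the Fuglede/selection-principle strategy that underlies the earlier small-volume results (Kn\"upfer--Muratov, Bonacini--Cristoferi, Julin, etc.), and you have correctly put your finger on the gap: there is no known way to close Stage~1 at $V=1$. The quantitative isoperimetric route gives $\alpha(\Omega)^2 \lesssim V$, which is vacuous here, and a Cicalese--Leonardi-type selection argument requires a mechanism to force the selected competitor toward a ball --- precisely what is missing once $D(\Omega)$ is comparable to $P(\Omega)$. There is also a quantitative error in Stage~2: positivity of the second variation at $B_R$ is \emph{not} the binding constraint at $|B_R|=1$. The ball of volume $V$ is strictly stable for all $V$ up to (and a bit past) $V_* \approx 3.51$, so the threshold $|B_R|\leq 1$ cannot come from $\inf_{k\geq 2}\Lambda_k(R)>0$; if Stage~1 could be closed, Stage~2 alone would prove substantially more than the stated theorem, and conversely the theorem's constant must be governed by something else.

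The paper takes a different route that avoids any nearly-spherical reduction. The key new input is a sharp integro-geometric inequality proved via Santal\'o's formula,
\[
|\Omega|^2 \;\leq\; \frac{1}{12\pi}\, P(\Omega)\, \iint_{\Omega\times\partial\Omega}\frac{dx\,dy}{|x-y|},
\]
with equality iff $\Omega$ is a ball. Combined with the Euler--Lagrange relation $H_{\partial\Omega}+v_\Omega = \lambda$ and the value of $\lambda$, this converts the minimality of $\Omega$ into a scalar inequality in $x=P(\Omega)$, into which one feeds Huisken's Minkowski inequality $(16\pi P(\Omega))^{1/2}\leq \int_{\partial\Omega}H$. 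The latter requires $\Omega$ to be outer-minimizing; the paper proves a \emph{conditional} outer-minimizing property for minimizers at volume $\leq 1$ (this, together with a concavity computation for $f_R(x)$, is where the constraint $|B_R|\leq 1$ genuinely enters, not second-variation positivity). Finally, a continuity argument in the volume parameter produces, if the theorem failed, a smooth non-round minimizer at the critical volume $V_B$, to which the above global inequalities apply and give a contradiction. So the honest verdict is: your proposal is a reasonable restatement of the pre-existing strategy, it correctly diagnoses where that strategy stalls, but it does not supply the new ideas (the Santal\'o-type inequality and the Minkowski/outer-minimizing step) that actually carry the proof past the small-volume regime.
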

  Previously Kn\"upfer--Muratov proved \cite{KnupferMuratov1,KnupferMuratov2} (see also \cite{BonaciniCristoferi,Julin,MuratovZaleski,FigalliFuscoMaggiMillot,ChoksiNeumayerTopaloglu}) proved that balls uniquely minimize $\cE(\cdot)$ among sets of sufficiently small volume $V$. It seems that  Theorem \ref{theo:roundness} is the first result concerning (a) in Conjecture \ref{conj:min-LD} to go beyond the the ``isoperimetric dominated'' (small volume) regime where $D(\Omega)\ll P(\Omega)$ in \eqref{eq:defn-E} allows the repulsive term $D(\Omega)$ to be treated as a lower-order term (cf.\ \cite[p.\ 1132]{KnupferMuratov1}). Along these lines, it appears to be the first quantitative estimate (cf.\ \cite{MuratovZaleski}). 

\subsection{Boundary-interior Coulomb interaction}
A crucial ingredient in the proof of Theorem \ref{theo:roundness} is the following sharp inequality (see Theorem \ref{theo:est-for-R3-LD})
\begin{equation}\label{eq:P-bdry-D-main-ineq-intro}
|\Omega|^2 \leq\frac{1}{12\pi} P(\Omega) \iint_{\Omega\times \partial\Omega}\frac{dxdy}{|x-y|}.
\end{equation}
Equality holds here if and only if $\Omega$ is a round ball. The proof of \eqref{eq:P-bdry-D-main-ineq-intro} uses Santal\'o's formula from integral geometry. The significance of this inequality for us is that the integral appearing above arises in the first variation of $\cE(\cdot)$ in a natural way. 

Similar considerations (cf.\ \ref{eq:P-D-main-ineq}) also yield mean convexity of critical points of $\cE(\cdot)$ for a certain range of volumes (Proposition \ref{prop:mean-convex}) and an improved bound for the minimal binding energy per particle (Corollary \ref{coro:binding-est}). 
\subsection{Other results concerning Conjecture \ref{conj:min-LD}}
\begin{enumerate}
\item Kn\"upfer--Muratov \cite{KnupferMuratov2} as well as Lu--Otto \cite{LuOtto} proved that there is $V_{N} > 0$ large so that no minimizer exists for sets of volume $V > V_{N}$. Frank--Killip--Nam later obtained   \cite{FrankKillipNam} the quantitative estimate $V_{N} \leq 8 \approx 2.38 V_*$. 
\item Frank--Nam have proven \cite{FrankNam:existence-nonexistence} that a minimizer exists $\cE(\Omega)$ among sets with $|\Omega|=V$ for $V \leq V_*$ (this covers precisely the conjecturally sharp range of volumes). 
\end{enumerate}
See also \cite{AlbertiChoksiOtto,ChoksiPeletier1,ChoksiPeletier,RenWei:torus,CicaleseSpadaro,RenWei:2tori,KnupferMuratovNovaga:low-density,Frank:non-spherical-drops,EmmertFrankKonig}.

\subsection{Remarks}
\begin{enumerate}
\item Theorem \ref{theo:roundness} can be seen to  hold for volumes $\leq 1+\eps$, where $\eps>0$ is ineffective. This follows from the proof of Theorem \ref{theo:roundness} in Section \ref{sec:roundness}. 
\item The techniques used here could be extended to prove a result similar to Theorem \ref{theo:roundness} in higher dimensions $n\geq 4$ for the functional $P(\Omega) + \frac 12 \iint_{\Omega\times \Omega} \frac{dxdy}{|x-y|^{\alpha}}$, with $\alpha=n-2$. We hope to investigate this elsewhere. 
\item A sharp estimate of the form \eqref{eq:P-bdry-D-main-ineq-intro} for general $n$ and $\alpha$ does not seem to follow from our techniques, and thus our methods do not seem to establish an analogue of Theorem \ref{theo:roundness} for $n=2$ or $n\geq 3$ but $\alpha \not = n-2$.
\end{enumerate}

\subsection{Organization}
Section \ref{sec:newt-pot} recalls some facts about the Newtonian potential. Section \ref{sec:outer-min} contains a conditional outer-minimizing result for minimizers of $\cE(\cdot)$. In Section \ref{sec:santalo} we use integral geometry to prove new isoperimetric-type inequalities involving the Coulomb energy. We compute the first variation of energy in Section \ref{sec:first-var} and show how the isoperimetric-type inequalities, the Minkowski inequality, and the first variation combine in Section \ref{sec:Mink}. Finally, in Section \ref{sec:roundness} we prove Theorem \ref{theo:roundness} using a continuity argument and the results from Section \ref{sec:Mink}. Appendix \ref{proof:stantalo} contains a proof of Santal\'o's formula. Section \ref{sec:binding} contains a brief discussion of the maximal binding energy per particle.

\subsection{Acknowledgements}
O.C. was supported by Terman Fellowship and an NSF grant (DMS-2304432). We are grateful to Rupert Frank and Juncheng Wei for their interest and suggestions as well as the referees for useful comments concerning the exposition. 

\section{The Newtonian potential} \label{sec:newt-pot}
For $\Omega\subset \RR^3$ measurable, we recall that
\begin{equation}\label{eq:Newtonian-potential}
v_\Omega(x) = \int_\Omega \frac{dy}{|x-y|}
\end{equation}
is the \emph{Newtonian potential} of $\Omega$. 
\begin{lemm}\label{lemm:NewtonianBall}
The Newtonian potential of a round ball satisfies
\[
v_{B_R}(x) = \begin{cases}
\frac{2\pi}{3}(3R^2-|x|^2) & |x|\leq R\\
\frac{4\pi R^3}{3|x|} & |x|\geq R. 
\end{cases}
\]
In particular, the Coulomb energy satisfies $D(B_R) = \frac{16\pi^2}{15} R^5$. 
\end{lemm}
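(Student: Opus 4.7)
The plan is to exploit the spherical symmetry of $B_R$: since the integrand in \eqref{eq:Newtonian-potential} is invariant under rotations about the origin when $\Omega = B_R$, the potential $v_{B_R}(x)$ depends only on $r := |x|$, so I write $v_{B_R}(x) = f(r)$.

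My preferred route is the classical shell decomposition. I will write
\[
v_{B_R}(x) = \int_0^R \bigg( \int_{\partial B_a} \frac{d\sigma(y)}{|x - y|} \bigg) da,
\]
and evaluate the inner integral by parametrizing $\partial B_a$ in spherical coordinates with polar axis through $x$: the relevant angular integral equals $4\pi a^2 / \max(r, a)$ (Newton's shell theorem). For $r \geq R$, this yields $f(r) = \int_0^R 4\pi a^2/r \, da = 4\pi R^3/(3r)$ immediately. For $r \leq R$, splitting the outer integral at $a = r$ gives
\[
f(r) = \int_0^r \frac{4\pi a^2}{r}\, da + \int_r^R 4\pi a \, da = \frac{4\pi r^2}{3} + 2\pi(R^2 - r^2) = \frac{2\pi}{3}(3R^2 - r^2),
\]
which agrees with the exterior formula at $r = R$, as a sanity check demands.

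An equally good alternative would be to solve the Poisson equation $-\Delta v_{B_R} = 4\pi \chi_{B_R}$ (a consequence of $-\Delta (1/|x|) = 4\pi \delta_0$ in $\RR^3$) in its radial ODE form $-(r^2 f')'/r^2 = 4\pi \chi_{[0,R]}$, fixing the four constants of integration by regularity at $r = 0$, decay at infinity, and matching $f$ and $f'$ at $r = R$.

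The formula for the Coulomb energy then follows at once from
\[
D(B_R) = \tfrac{1}{2} \int_{B_R} v_{B_R}(x)\, dx = \tfrac{1}{2} \int_0^R \tfrac{2\pi}{3}(3R^2 - r^2) \cdot 4\pi r^2 \, dr = \frac{4\pi^2}{3}\Big( R^5 - \tfrac{R^5}{5} \Big) = \frac{16\pi^2}{15} R^5.
\]
This is entirely textbook material, so there is no real obstacle to speak of; the only mild care is in the shell-theorem angular integral (or equivalently in matching the ODE constants at $r = R$).
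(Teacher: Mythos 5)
Your primary route (shell decomposition via Newton's shell theorem) is correct and differs from the paper's argument, which instead observes that $v_{B_R}$ is radial and solves the Poisson equation $\Delta v_{B_R} = -4\pi\chi_{B_R}$, reduces to the radial ODE, and fixes constants using the value $v_{B_R}(0) = 2\pi R^2$ (obtained by a direct polar integral) and decay at infinity --- this is precisely the ``equally good alternative'' you sketch in your second paragraph. The shell-theorem route is more elementary and self-contained: it needs only the angular integral identity $\int_{\partial B_a} |x-y|^{-1}\,d\sigma(y) = 4\pi a^2/\max(r,a)$, which one can derive from scratch, and it does not presuppose $-\Delta(1/|x|) = 4\pi\delta_0$. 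The ODE route, which the paper takes, is shorter once you grant the PDE interpretation and the fundamental-solution fact, at the cost of being slightly less self-contained. Your computation of $D(B_R)$ from the interior potential is the same as the paper's (the paper writes it as $2\pi\int_0^R \tfrac{2\pi}{3}(3R^2-r^2) r^2\,dr$, which is just your expression with the prefactors already merged). Everything checks out arithmetically.
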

\begin{proof}
Note that $v_{B_R}(0) = 4\pi \int_0^R r dr = 2\pi R^2$. Moreover, $v_{B_R}$ is rotationally invariant and solves $\Delta v_{B_R} = -4\pi \chi_{B_R}$. Thus, $\frac{\partial^2 v_{B_R}}{\partial r^2} + \frac{2}{r} \frac{\partial v_{B_R}}{\partial r} = -4\pi$ for $r \in (0,R)$. This implies that $v_{B_R}(r) = \frac{2\pi}{3}(3R^2-r^2)$ for $r\leq R$. Arguing similarly for $r>R$ (and using $\lim_{r\to\infty}v_{B_R}(r) = 0$) we find $v_{B_R}(r) = \frac{4\pi R^3}{3r}$ for $r\geq R$. This proves the first assertion and the second follows from
\[
D(B_R) = 2 \pi\int_0^R \frac{2\pi}{3}(3R^2-r^2) r^2 dr = \frac{16\pi^2}{15} R^5.
\]
This completes the proof. 
\end{proof}

\begin{lemm}\label{lemm:riesz-Talenti}
For any measurable set $\Omega\subset\RR^3$ with $|\Omega| = |B_R|$, the  Newtonian potential satisfies $v_\Omega(x) \leq v_{B_R}(0) = 2\pi R^2$. 
\end{lemm}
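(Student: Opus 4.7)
The plan is to translate so that $x$ becomes the origin and then apply a rearrangement (bathtub-principle) argument. First, for any fixed $x \in \RR^3$, a change of variables $z = y - x$ gives
\[
v_\Omega(x) = \int_\Omega \frac{dy}{|x-y|} = \int_{\Omega - x} \frac{dz}{|z|}.
\]
Setting $\tilde\Omega := \Omega - x$, which has the same Lebesgue measure as $\Omega$, hence $|\tilde\Omega| = |B_R|$, it suffices to prove
\[
\int_{\tilde\Omega} \frac{dz}{|z|} \leq \int_{B_R} \frac{dz}{|z|},
\]
since by Lemma \ref{lemm:NewtonianBall} the right-hand side equals $v_{B_R}(0) = 2\pi R^2$.

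Second, because $z \mapsto |z|^{-1}$ is a radially symmetric strictly decreasing function, for fixed Lebesgue measure the integral $\int_A |z|^{-1} dz$ is maximized over measurable sets $A$ by taking $A$ to be the centered ball of that measure. This is a direct consequence of the bathtub principle, or equivalently, of the layer-cake formula: writing
\[
\int_{\tilde\Omega} \frac{dz}{|z|} = \int_0^\infty \bigl|\{z \in \tilde\Omega : |z|^{-1} > t\}\bigr|\, dt,
\]
one bounds the integrand by $\min\bigl(|\tilde\Omega|, |\{z : |z|^{-1} > t\}|\bigr) = \min\bigl(|B_R|, |\{z : |z|^{-1} > t\}|\bigr)$, which is precisely what the corresponding integrand equals when $\tilde\Omega$ is replaced by $B_R$ (as $\{|z|^{-1} > t\}$ is always a centered ball).

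There is no real obstacle here; the result is an elementary rearrangement/bathtub-type bound, and the main content is simply the translation step that reduces the estimate at a general point $x$ to evaluating $v_{B_R}$ at the origin.
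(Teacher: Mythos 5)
Your proof is correct and uses essentially the same idea as the paper: both invoke the bathtub principle (the paper cites \cite[Theorem 1.14]{LiebLoss} directly with $f(y)=-|x-y|^{-1}$ and $G=|\Omega|$), and your translation step plus layer-cake unpacking is just a more explicit rendering of that one-line argument.
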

\begin{proof}
Apply the ``bathtub principle'' \cite[Theorem 1.14]{LiebLoss} with $f(y) = -\frac{1}{|x-y|}$ and $G=|\Omega|$.
\end{proof}

\section{The outer-minimizing property}\label{sec:outer-min}
We say that a measurable Caccioppoli set $\Omega\subset \RR^3$ is \emph{outer-minimizing} if any $\Omega\subset \hat \Omega$ measurable has $P(\Omega) \leq P(\hat \Omega)$. We say that $\Omega\subset \RR^3$ measurable is \emph{strictly outer-minimizing} if any $\Omega\subset \hat \Omega$ measurable with $P(\hat\Omega) \leq P(\Omega)$ must have $|\hat\Omega\setminus\Omega|=0$. In this section we establish the following (conditional) outer-minimizing property for minimizers of $\cE$.

\begin{prop}\label{prop:outer-min}
Fix $V=\frac43\pi R^3 \leq 1$ and assume that $E(V) = \cE(B_R)$. If $\Omega\subset \RR^3$ has $|\Omega|=V$ and $\cE(\Omega) = E(V)$ then $\Omega$ is strictly outer-minimizing.
\end{prop}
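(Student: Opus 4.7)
I would argue by contradiction. Suppose there is a measurable $\hat{\Omega} \supset \Omega$ with $W := \hat{\Omega} \setminus \Omega$ of positive measure and $P(\hat{\Omega}) \leq P(\Omega)$, and set $\hat{V} := |\hat{\Omega}|$ and $\lambda := (V/\hat{V})^{1/3} \in (0,1)$. Since $\lambda \hat{\Omega}$ has volume $V$, minimality of $\Omega$ yields
\[
\cE(B_R) = \cE(\Omega) \leq \lambda^2 P(\hat{\Omega}) + \lambda^5 D(\hat{\Omega}) \leq \lambda^2 P(\Omega) + \lambda^5 D(\hat{\Omega}). \qquad (\star)
\]
The goal is to contradict $(\star)$ using the hypothesis $V \leq 1$.

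The next step is to estimate $\lambda^5 D(\hat{\Omega})$. Decomposing $D(\hat{\Omega}) = D(\Omega) + \int_{\Omega} v_W\, dx + D(W)$, Lemma~\ref{lemm:riesz-Talenti} gives $\int_{\Omega} v_W = \int_W v_\Omega \leq 2\pi R^2 |W|$ (since $v_\Omega \leq 2\pi R^2$ pointwise), and Riesz rearrangement gives $D(W) \leq D(B_{r_W}) = \tfrac{16\pi^2}{15} r_W^5$ with $r_W := (3|W|/(4\pi))^{1/3}$. Substituting these into $(\star)$ together with the energy equality $D(\Omega) = \cE(B_R) - P(\Omega)$ and the a priori bound $P(\Omega) \leq \cE(B_R)$ (from $D(\Omega) \geq 0$), one obtains after rearrangement the necessary condition
\[
(1-\lambda^2)\, \cE(B_R) \leq \lambda^5 \Bigl( 2\pi R^2 |W| + \tfrac{16\pi^2}{15} r_W^5 \Bigr). \qquad (\dagger)
\]
In parallel, the coarser Riesz bound $D(\hat{\Omega}) \leq D(B_{\hat R}) = \lambda^{-5} D(B_R)$ applied in $(\star)$ yields $P(\Omega) \geq \lambda^{-2} P(B_R)$; combined with $P(\Omega) \leq (1+V/5)\, P(B_R)$ this confines $u := |W|/V = \lambda^{-3} - 1$ to the admissible range $\bigl(0,\ (1+V/5)^{3/2} - 1\bigr]$.

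Normalizing $(\dagger)$ by $P(B_R) = 4\pi R^2$ and substituting $\lambda = (1+u)^{-1/3}$ reduces it to the polynomial inequality
\[
\bigl((1+u)^{5/3} - (1+u)\bigr)\bigl(1 + \tfrac{V}{5}\bigr) \leq V \bigl(\tfrac{u}{2} + \tfrac{u^{5/3}}{5}\bigr), \qquad (\ddagger)
\]
and the main obstacle is to show that $(\ddagger)$ has no solution $u > 0$ in the admissible range when $V \leq 1$. At $u = 0$ both sides vanish; the left $u$-derivative at $0$ is $\tfrac{2}{3}(1 + V/5)$ while the right is $\tfrac{V}{2}$, and the former strictly exceeds the latter precisely when $V < 20/11$, which is implied by $V \leq 1$. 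I expect an elementary monotonicity argument --- using the ceiling $(1+u)^{2/3} \leq 1 + V/5$ on the admissible range to control the derivative of the difference LHS $-$ RHS --- to extend this strict reversal of $(\ddagger)$ to the entire admissible range, giving the desired contradiction. The hypothesis $V \leq 1$ enters precisely in closing this elementary polynomial inequality.
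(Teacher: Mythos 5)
You follow the same structural route as the paper: rescale $\hat\Omega$ back to volume $V$, decompose $D(\hat\Omega) = D(\Omega) + \int_W v_\Omega + D(W)$, estimate the cross term by the bathtub principle and $D(W)$ by Riesz rearrangement, and extract a polynomial inequality in the fractional excess volume. Your algebra for reaching $(\dagger)$ is equivalent to the paper's: the paper replaces $\lambda^5 D(\Omega)$ by $\lambda^2 D(\Omega)$ to combine with the perimeter term into $\lambda^2 \cE(\Omega)$, while you route through the energy identity $D(\Omega) = \cE(B_R) - P(\Omega)$ and $P(\Omega) \leq \cE(B_R)$; both yield the same bound $\lambda^2 \cE(B_R)$ for the main term, hence the same $(\ddagger)$. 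Where you diverge is in refuting $(\ddagger)$. The paper needs no a priori restriction on $u = \alpha^3$: it applies two Bernoulli-type lower bounds, $(1+\alpha^3)^{5/3} \geq 1 + \frac{5}{3}\alpha^3$ and $(1+\alpha^3)^{5/3} \geq \alpha^5 + \frac{5}{3}\alpha^2$, to deduce both $\alpha^2 \geq \frac32$ and $\alpha < 1$, a contradiction. You instead invoke a coarse Riesz bound on $D(\hat\Omega)$ to confine $u$ to $(0,\,(1+V/5)^{3/2}-1]$ and aim to show $(\ddagger)$ fails there by monotonicity.

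The gap is in that final step, which is only sketched, and the specific estimate you propose does not close it. Writing $g(u)$ for $\text{LHS} - \text{RHS}$ of $(\ddagger)$, one has $g'(u) = \bigl(\tfrac{5}{3}(1+u)^{2/3} - 1\bigr)\bigl(1 + \tfrac{V}{5}\bigr) - V\bigl(\tfrac{1}{2} + \tfrac{u^{2/3}}{3}\bigr)$. If, as you suggest, the ceiling $(1+u)^{2/3} \leq 1+V/5$ is used to bound $u^{2/3} \leq (1+u)^{2/3} \leq 1+V/5$, then the crude estimate $(1+u)^{2/3} \geq 1$ in the first factor gives only
\[
g'(u) \geq \tfrac{2}{3}\bigl(1+\tfrac{V}{5}\bigr) - V\bigl(\tfrac{1}{2} + \tfrac{1+V/5}{3}\bigr),
\]
which at $V=1$ equals $0.8 - 0.9 = -0.1 < 0$; this ceiling is too weak to establish $g' > 0$. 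The correct bound to use on the admissible range is the sharper $u^{2/3} \leq u_{\max}^{2/3} = \bigl((1+V/5)^{3/2}-1\bigr)^{2/3}$ (at $V=1$ this is $\approx 0.463$, not $1.2$), which does produce $g'(u) \geq \tfrac{2}{3}(1+\tfrac{V}{5}) - V(\tfrac12 + \tfrac{u_{\max}^{2/3}}{3}) > 0$ for $V \leq 1$ and hence $g > 0$ on the interval. So the plan is salvageable, but as written it relies on an estimate that does not close, and the positivity of the corrected derivative bound across all $V \leq 1$ still needs to be verified rather than just expected.
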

\begin{proof}
Consider $\Omega\subset \hat \Omega \subset \RR^3$ measurable with $P(\hat\Omega) \leq P(\Omega)$. Write $|\hat\Omega| = \frac 43\pi R^3 (1+\alpha^3)$ for some $\alpha \geq 0$. Our goal is to prove that $\alpha=0$. 

Write $\Gamma : = \hat\Omega\setminus \Omega$ and $\tilde \Omega = (1+\alpha^3)^{-\frac 13} \hat\Omega$. Note that $|\tilde\Omega| = |\Omega|$ and $|\Gamma| = \frac{4}{3}\pi R^3\alpha^3$. The assumption $P(\hat\Omega) \leq P(\Omega)$ implies
\begin{align*}
P(\tilde\Omega) & = (1+\alpha^3)^{-\frac 23}P(\hat\Omega) \leq (1+\alpha^3)^{-\frac23}P(\Omega) .
\end{align*}
Recalling the definition of the Newtonian potential $v_\Omega$ in \eqref{eq:Newtonian-potential} we have
\begin{align*}
D(\tilde\Omega) & = (1+\alpha^3)^{-\frac53}D(\hat\Omega) \\
& = (1+\alpha^3)^{-\frac53}D(\Omega) + (1+\alpha^3)^{-\frac53} \int_\Gamma v_\Omega(x) dx + (1+\alpha^3)^{-\frac53} D(\Gamma) \\
& \leq (1+\alpha^3)^{-\frac23}D(\Omega) +  (1+\alpha^3)^{-\frac53} 2\pi R^2 |\Gamma| + (1+\alpha^3)^{-\frac53} D(B_{\alpha R}) \\
& = (1+\alpha^3)^{-\frac23}D(\Omega) +  (1+\alpha^3)^{-\frac53} \frac{8\pi^2}{3} R^5\alpha^3 + (1+\alpha^3)^{-\frac53} \frac{16\pi^2}{15} R^5\alpha^5.
\end{align*}
In the third line we used the estimate $(1+\alpha^3)^{-\frac53}\leq (1+\alpha^3)^{-\frac23}$, Lemma \ref{lemm:riesz-Talenti}, and the fact that balls maximize $D(\cdot)$ for fixed volumes. Since $\cE(\Omega)= E(V)\leq \cE(\tilde \Omega)$ we find
\begin{align*}
\cE(\Omega) \leq (1+\alpha^3)^{-\frac23} \cE(\Omega)  +  (1+\alpha^3)^{-\frac53} \frac{8\pi^2}{3} R^5\alpha^3 + (1+\alpha^3)^{-\frac53} \frac{16\pi^2}{15} R^5\alpha^5. 
\end{align*}
Rearranging and using $\cE(\Omega) = E(V)=\cE(B_R)$ we find 
\begin{align*}
0 \leq \left (1+\alpha^3 - (1+\alpha^3)^{\frac53}\right) \cE(B_R)  +  \frac{8\pi^2}{3} R^5\alpha^3 +  \frac{16\pi^2}{15} R^5\alpha^5. 
\end{align*}
Using $\cE(B_R) = 4\pi R^2 + \frac{16\pi^2}{15}R^5$ this can be simplified to read
\begin{equation}\label{eq:outer-min-alpha-eqn-genR}
0 \leq \left (1+\alpha^3 - (1+\alpha^3)^{\frac53}\right) \left( \frac {15}{\pi} R^{-3} + 4 \right)  + 10 \alpha^3 + 4 \alpha^5. 
\end{equation} 
Since $\frac{15}{\pi} R^{-3} \geq 20$ and $1+\alpha^3 - (1+\alpha^3)^{\frac53} < 0$ we find
\begin{equation}\label{eq:ineq-for-alpha}
0 \leq   12\left(1+\alpha^3 - (1+\alpha^3)^{\frac53}\right)  + 5 \alpha^3 + 2 \alpha^5. 
\end{equation} 
We now use \eqref{eq:ineq-for-alpha} to prove that $\alpha=0$. Since $(1+\alpha^3)^{\frac 53} \geq 1+\frac 53 \alpha^3$, we obtain
\[
0 \leq  -8\alpha^3  + 5 \alpha^3 + 2 \alpha^5
\]
which gives 
\begin{equation}\label{eq:imp-alpha}
\alpha>0 \quad \Rightarrow \quad \alpha^2 \geq \frac 32.
\end{equation}
 On the other hand we have that
\[
(1+\alpha^3)^{\frac53} = \alpha^5(1+\alpha^{-3})^{\frac53}\geq \alpha^5 + \frac 53 \alpha^2.
\]
Using this instead in \eqref{eq:ineq-for-alpha} we find
\[
0 \leq   12 - 20 \alpha^2 + 17 \alpha^3  - 10 \alpha^5 : = f(\alpha). 
\]
We note that $f(1) = -1$. Moreover,
\[
f'(\alpha) = -\alpha(40-51\alpha+50\alpha^3) : = -\alpha g(\alpha). 
\]
It's easy to check that $g(1) = 39$. Since
\[
g'(\alpha) = 150\alpha^2-51 \geq 0
\]
for $\alpha \geq 1$ we find that $g(\alpha) >0$ for $\alpha\geq 1$. Thus $f'(\alpha) < 0$ for $\alpha \geq 1$. Finally, this implies that $f(\alpha) <0$ for $\alpha\geq 1$, so \eqref{eq:ineq-for-alpha} implies that $\alpha <1$. Combined with \eqref{eq:imp-alpha} this gives $\alpha = 0$ as claimed.
\end{proof}

\begin{rema}
The above proof could be extended (with some modifications) to cover the range $V\leq \frac{20}{11}$. 
\end{rema}

\section{Santal\'o's formula and isoperimetric-type estimates} \label{sec:santalo}
One of the ingredient in the proof of Theorem \ref{theo:roundness} are new isoperimetric-type inequalities relating Coulomb energy terms to perimeter and volume. To state these inequalities, we recall \eqref{eq:defn-D} and also define\footnote{It's important to note the factor of $\frac 12$ that appears in \eqref{eq:defn-D} does not appear in \eqref{eq:D-partial}.}
\begin{equation}\label{eq:D-partial}
D^\partial(\Omega) = \iint_{\Omega\times \partial\Omega} \frac{dxdy}{|x-y|} = \int_{\partial\Omega}v_\Omega(x) dx, 
\end{equation} 
where the Newtonian potential $v_\Omega(x)$ is defined in \eqref{eq:Newtonian-potential}. In this section we prove:
\begin{theo}\label{theo:est-for-R3-LD}
For a non-empty compact set $\Omega\subset \RR^3$ with smooth boundary we have 
\begin{equation}\label{eq:P-D-main-ineq}
|\Omega|^3 < \frac{3}{16\pi} P(\Omega)^2  D(\Omega)
\end{equation}
and
\begin{equation}\label{eq:P-bdry-D-main-ineq}
|\Omega|^2 \leq\frac{1}{12\pi} P(\Omega) D^\partial(\Omega)
\end{equation}
Equality holds in \eqref{eq:P-bdry-D-main-ineq} if and only if $\Omega$ is a round ball. 
\end{theo}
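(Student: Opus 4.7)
My plan is to prove \eqref{eq:P-bdry-D-main-ineq} by combining Santal\'o's formula with the Cauchy--Schwarz inequality. Let $\nu$ denote the inward unit normal to $\partial\Omega$, and let $\tau(x,v)$ be the first exit time of the straight ray $r\mapsto x+rv$ for $x\in\partial\Omega$ and $v\in S^2$ with $v\cdot\nu(x)>0$. Applying Santal\'o's formula to the constant function $1$ on the unit tangent bundle gives
\[
4\pi|\Omega|=\int_{\partial\Omega}\int_{v\cdot\nu>0}\tau(x,v)\,(v\cdot\nu)\,d\sigma(v)\,dA(x).
\]
Cauchy--Schwarz then yields
\[
16\pi^2|\Omega|^2\leq\left(\int_{\partial\Omega}\int_{v\cdot\nu>0}\tau^2\,d\sigma\,dA\right)\left(\int_{\partial\Omega}\int_{v\cdot\nu>0}(v\cdot\nu)^2\,d\sigma\,dA\right),
\]
and using the elementary hemispherical integral $\int_{v\cdot\nu>0}(v\cdot\nu)^2\,d\sigma=2\pi/3$, the second factor is exactly $\tfrac{2\pi}{3}P(\Omega)$.

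To bound the first factor by $D^\partial(\Omega)$, I use a pointwise bound on the Newtonian potential. In polar coordinates about $x\in\partial\Omega$,
\[
v_\Omega(x)=\int_\Omega\frac{dy}{|x-y|}=\int_{S^2}\int_0^\infty r\,\chi_\Omega(x+rv)\,dr\,d\sigma(v),
\]
and for $v$ pointing inward the ray lies in $\Omega$ on $[0,\tau(x,v)]$, so the inner integral is at least $\tau^2/2$. Discarding the outward hemisphere gives $v_\Omega(x)\geq\tfrac12\int_{v\cdot\nu>0}\tau^2\,d\sigma$, and integrating over $\partial\Omega$ yields $D^\partial(\Omega)\geq\tfrac12\int_{\partial\Omega}\int_{v\cdot\nu>0}\tau^2\,d\sigma\,dA$. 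Substituting into the Cauchy--Schwarz bound proves $12\pi|\Omega|^2\leq P(\Omega)\,D^\partial(\Omega)$.

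The main obstacle is the equality case. Equality demands simultaneously: (i) no ray from $\partial\Omega$ meets $\Omega$ in more than a single segment (forcing $\Omega$ convex with $\tau(x,v)$ equal to the full chord length $L(x,v)$); and (ii) Cauchy--Schwarz saturation, $L(x,v)=c\,(v\cdot\nu(x))$ for a constant $c>0$. Writing (ii) at both endpoints $x,y\in\partial\Omega$ of a chord gives the symmetric relation
\[
(y-x)\cdot\nu(x)=(x-y)\cdot\nu(y)=|x-y|^2/c.
\]
Letting $y\to x$ along a tangent direction and expanding $\partial\Omega$ locally as a graph over its tangent plane forces every principal curvature at every point of $\partial\Omega$ to equal $2/c$. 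Hence $\partial\Omega$ is totally umbilical and therefore a round sphere of radius $c/2$, as required.

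For \eqref{eq:P-D-main-ineq} I plan a shorter chord-integration argument. Let $d\mu$ denote the Euclidean kinematic measure on oriented lines in $\RR^3$ and $L(\ell)=|\ell\cap\Omega|$ the total chord length. Elementary changes of variables give
\[
\int L\,d\mu=4\pi|\Omega|,\qquad \mu(\{\ell:\ell\cap\Omega\neq\emptyset\})\leq\pi P(\Omega),\qquad \int L^3\,d\mu\leq 12\,D(\Omega);
\]
the second estimate comes from $\int N(\ell)\,d\mu=2\pi P(\Omega)$ (with $N(\ell)$ the count of intersections of $\ell$ with $\partial\Omega$), and the third from the fact that for a disjoint union of intervals $\iint|s-t|\,ds\,dt\geq L^3/6$. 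H\"older's inequality $\bigl(\int L\cdot 1\,d\mu\bigr)^3\leq\bigl(\int L^3\,d\mu\bigr)\bigl(\int 1\,d\mu\bigr)^2$ combines these into $|\Omega|^3\leq\tfrac{3}{16\pi}P(\Omega)^2D(\Omega)$; strictness holds because lines tangent to $\partial\Omega$ have $L=0$, so $L$ is non-constant on the support of $d\mu$ and H\"older is strict.
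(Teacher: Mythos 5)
Your proof of \eqref{eq:P-bdry-D-main-ineq} is essentially identical to the paper's: Santal\'o's formula for $f\equiv 1$ gives $4\pi|\Omega|=\int_{U^+}\ell\cos u\,du$, Cauchy--Schwarz with the hemispherical integral $\int_{\SS^2_+}\cos^2\theta\,d\sigma=\tfrac{2\pi}{3}$ gives the $\tfrac{2\pi}{3}P(\Omega)$ factor, and the polar-coordinate lower bound $v_\Omega(x)\geq\tfrac12\int_{U_x^+}\tau^2\,d\sigma$ bounds the remaining factor by $2D^\partial(\Omega)$. You handle the equality case more explicitly than the paper does (the paper simply asserts that $\ell(u)=K\cos u$ a.e.\ forces a ball); your detour through convexity and the symmetric chord relation $(y-x)\cdot\nu(x)=(x-y)\cdot\nu(y)=|x-y|^2/c$ is not strictly necessary --- the local expansion at a single boundary point for nearly tangent directions already gives $\kappa_1=\kappa_2=2/c$ --- but it is correct and makes the paper's terse remark precise.

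For \eqref{eq:P-D-main-ineq} you take a genuinely different route. The paper again uses Santal\'o: $4\pi|\Omega|=\int_{U^+}\ell\cos u\,du$, H\"older with exponents $(3,3/2)$ against the weight $\cos u\,du$, the identity $\int_{U^+}\cos u\,du=\pi P(\Omega)$, and the bound $\int_{U^+}\ell^3\cos u\,du\leq 12D(\Omega)$ (proved by integrating the pointwise estimate on $v_\Omega$ over $\Omega$ and then applying Santal\'o once more). You instead work directly with kinematic measure $d\mu$ on oriented lines and the chord length $L(\ell)$, using $\int L\,d\mu=4\pi|\Omega|$, $\mu\{\ell\cap\Omega\neq\emptyset\}\leq\tfrac12\int N\,d\mu=\pi P(\Omega)$, and the chord-length reformulation of $D$. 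Your three formulas and the H\"older step produce exactly the same constant $\tfrac{3}{16\pi}$. This is a nice, more classical alternative: the Cauchy--Crofton framework makes the ``collapse the disjoint intervals to a single segment'' point transparent, whereas the Santal\'o route is what generalizes to Riemannian settings. One small slip: the correct elementary bound is $\iint_{I\times I}|s-t|\,ds\,dt\geq L^3/3$ (with equality for a single interval), not $L^3/6$; since $4D(\Omega)=\int\bigl(\iint_{(\ell\cap\Omega)^2}|s-t|\,ds\,dt\bigr)\,d\mu$, this is what yields $\int L^3\,d\mu\leq 12D(\Omega)$. (If you meant the one-sided integral over $s<t$ then $L^3/6$ is right; either way your stated conclusion $\int L^3\,d\mu\leq12D(\Omega)$ is correct.) Your strictness argument is also fine in spirit --- tangent lines themselves have measure zero, but nearly tangent lines with $L$ arbitrarily small occupy positive measure, so $L$ is not essentially constant on $\{\ell\cap\Omega\neq\emptyset\}$ and H\"older is strict.
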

The proof of Theorem \ref{theo:est-for-R3-LD} relies on an integro-geometric formula due to Santal\'o \cite{Santalo}. We refer to e.g., \cite{BanchoffPohl,Croke:iso,Croke:4diso,Croke:synthetic,KloKup,HoisingtonMcGrath} for related results. 
\begin{rema}
Kn\"upfer--Muratov proved that \eqref{eq:P-D-main-ineq} holds with some abstract constant via an interpolation argument \cite[Lemma 7.1]{KnupferMuratov2} and more recently, Kn\"upfer--Muratov--Novaga \cite[Theorem 2.2]{KnupferMuratovNovaga} proved\footnote{The inequality is not stated explicitly in this form in \cite{KnupferMuratovNovaga} but follows from their work in a straightforward manner; cf.\ Lemma \ref{lem:estar-ineq}.} a version of \eqref{eq:P-D-main-ineq} with an explicit constant (they obtain $\frac{4}{9\pi}$ in place of $\frac{3}{16\pi}$). Our proof of \eqref{eq:P-D-main-ineq} is completely different from theirs and yields a better (but still not sharp) constant. See also Appendix \ref{sec:binding} some further discussion. 
\end{rema}

\subsection{Santal\'o's formula} Suppose that $\Omega\subset \RR^N$ is a compact set with smooth boundary. Let $U : = \Omega\times \SS^{N-1} \subset T\Omega$ denote the unit sphere bundle over $\Omega$. We write $U_x:=\{x\}\times \SS^{N-1}$ for the fiber over $x$. For $\nu(x)$ the \emph{inwards} pointing unit normal to $\partial\Omega$, we set
\[
U^+ : = \{(x,\sigma) \in \partial\Omega \times \SS^{N-1} :  \bangle{\sigma,\nu(x)} > 0\}
\]
and $U_x^+ : = U^+\cap U_x$. Below we often write $u =(x,\sigma) \in U^+$ and $\cos u = \bangle{\sigma,\nu(x)} $. Observe that $U$ (resp.\ $U^+$) is a smooth submanifold of $\RR^N \times \RR^N$ and thus inherits a Riemannian metric. We will write $du$ for the induced volume measure on both manifolds.  We define a function $\ell : U \to [0,\infty)$ defined by
\[
\ell(x,\sigma) : = \sup\{t \geq 0 : x + s \sigma \in \Omega \textrm{ for all } 0 \leq s \leq t\}.
\]
The following result is the main tool used in the proof of Theorem \ref{theo:est-for-R3-LD}. 
\begin{theo}[Santal\'o]\label{theo:santalo}
Set $\Phi ((x,\sigma),t) = (x+t\sigma,\sigma)$ for $t < \ell(x,\sigma)$.  Then,
\[
\int_{U} f(u) du = \int_{U^{+}} \int_{0}^{\ell(u)} f(\Phi(u,t)) \cos u \, dt du
\]
for any $f \in L^1(U)$. 
\end{theo}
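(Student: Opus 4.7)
The plan is to interpret Santaló's formula as a global change-of-variables identity on the unit sphere bundle, and to compute the Jacobian of $\Phi$ by exploiting the product structure. Since $U = \Omega \times \mathbb{S}^{N-1}$ and $U^+ \subset \partial\Omega \times \mathbb{S}^{N-1}$ inherit their Riemannian volume forms from ambient Euclidean products, these factor as $du = dy\, d\sigma$ on $U$ and $du = d\mathcal{H}^{N-1}(x)\, d\sigma$ on $U^+$. Because $\Phi$ preserves the $\sigma$-component, Fubini reduces the claim to the slice formula
\begin{equation*}
\int_\Omega g(y)\, dy \;=\; \int_{\partial\Omega^+_\sigma} \int_0^{\ell(x,\sigma)} g(x + t\sigma)\, \bangle{\sigma, \nu(x)}\, dt\, d\mathcal{H}^{N-1}(x)
\end{equation*}
for each fixed $\sigma$, where $\partial\Omega^+_\sigma := \{x \in \partial\Omega : \bangle{\sigma, \nu(x)} > 0\}$.

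To prove this slice identity, I would introduce $\Psi_\sigma(x, t) := x + t\sigma$ on $\partial\Omega^+_\sigma \times \{0 < t < \ell(x,\sigma)\}$. Choosing a local orthonormal frame $\{e_1, \dots, e_{N-1}\}$ of $T_x \partial\Omega$ oriented so that $\nu(x)$ completes it to a positively oriented basis of $\mathbb{R}^N$, the matrix of $D\Psi_\sigma$ has columns $e_1, \dots, e_{N-1}, \sigma$, so
\begin{equation*}
|\det D\Psi_\sigma(x,t)| \;=\; |\det[e_1 \mid \cdots \mid e_{N-1} \mid \sigma]| \;=\; |\bangle{\sigma, \nu(x)}| \;=\; \cos u.
\end{equation*}
The map $\Psi_\sigma$ is then a bijection from its domain onto a full-measure subset of $\Omega$: for any $y \in \Omega$, the backward ray $s \mapsto y - s\sigma$ exits the bounded set $\Omega$ at a first point $x \in \partial\Omega$, at which $\bangle{\sigma, \nu(x)} \geq 0$ with equality exactly at tangential hits. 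Applying the change-of-variables formula produces the slice identity, and integrating against $d\sigma$ yields Santaló's formula.

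The main obstacle is verifying that the tangential first-hits form a null set, so that $\Psi_\sigma$ genuinely covers $\Omega$ up to measure zero for a.e.\ $\sigma$. For smooth $\partial\Omega$ the ``tangent envelope'' is a smoothly parametrized $(N-1)$-dimensional subset of $\mathbb{R}^N$, hence $N$-dimensionally negligible; this can be made rigorous by a Sard-type argument applied to the map $(x, t) \mapsto x + t\sigma$ restricted to the tangent locus $\bangle{\sigma, \nu(x)} = 0$, or by a direct estimate based on the second fundamental form of $\partial\Omega$. Everything else (the Jacobian computation and the bookkeeping of the factored measures on $U$ and $U^+$) is routine once this null-set issue is dispatched.
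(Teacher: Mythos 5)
Your proposal is correct and takes essentially the same approach as the paper: both recognize $\Phi$ as a change of variables whose Jacobian is $\cos u$ and conclude by the change-of-variables formula. You organize the computation by first factoring out the $\sigma$-integral via Fubini and then computing the $N\times N$ Jacobian of the slice map $\Psi_\sigma$, whereas the paper directly computes the $(2N-1)\times(2N-1)$ Jacobian of $\Phi:\tilde U\to U^\circ$; this is only an organizational simplification, and you are in fact more careful than the paper about the measure-zero tangential locus (the paper simply asserts that $U^\circ\setminus\Phi(\tilde U)$ is null without comment, while your Sard-type argument applied to $(x,t)\mapsto x+t\sigma$ on the critical set $\{\langle\sigma,\nu(x)\rangle=0\}$ supplies the justification).
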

This is proven in \cite[pp.\ 336--338]{Santalo} (see also \cite[p.\ 286]{Berger:geo}) (in the more general setting of the unit sphere bundle on a Riemannian manifold). We sketch the proof of Theorem \ref{theo:santalo} in Appendix \ref{proof:stantalo}. In the sequel will only ever use the following special case. 
\begin{prop} \label{prop:santalo-ell}
For $\alpha > -1$ we have
\[
\int_U \ell(u)^\alpha du = \frac{1}{\alpha+1} \int_{U^+} \ell(u)^{\alpha+1} \cos u\, du . 
\]
\end{prop}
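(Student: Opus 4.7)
The plan is simply to apply Santalo's formula (Theorem \ref{theo:santalo}) with the test function $f(u) = \ell(u)^\alpha$ and evaluate the resulting inner integral in closed form. The key geometric observation is how $\ell$ transforms under the flow $\Phi$: for $u=(x,\sigma)\in U^+$ and $t\in(0,\ell(u))$, the point $\Phi(u,t)=(x+t\sigma,\sigma)$ lies in the interior unit bundle, and by definition
\[
\ell(\Phi(u,t)) = \sup\{s\geq 0 : x+(t+r)\sigma \in \Omega \text{ for all } 0\leq r\leq s\} = \ell(u) - t.
\]
This is the only nontrivial identity needed.

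Given this, Santalo's formula produces
\[
\int_U \ell(u)^\alpha\, du = \int_{U^+} \int_0^{\ell(u)} (\ell(u)-t)^\alpha \cos u \, dt\, du,
\]
and the inner $t$-integral evaluates directly via the substitution $s=\ell(u)-t$ to $\ell(u)^{\alpha+1}/(\alpha+1)$, using $\alpha>-1$ to ensure convergence at $s=0$. Pulling this out of the outer integral yields exactly the claimed identity.

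The only point needing a brief justification is that $\ell^\alpha \in L^1(U)$ so that Santalo's formula applies. Since $\Omega$ is compact, $\ell$ is bounded above by $\diam(\Omega)$, so the RHS $\frac{1}{\alpha+1}\int_{U^+} \ell^{\alpha+1}\cos u\, du$ is manifestly finite (the $\cos u$ factor is bounded and $U^+$ has finite measure in the boundary sphere bundle). For the LHS, one fixes $\sigma\in\SS^{N-1}$ and slices $\Omega$ by lines parallel to $\sigma$; on each such line $\ell$ is essentially the distance to the forward boundary exit point, and a one-dimensional computation (or Fubini together with the forward direction of Santalo) gives $\int_{\Omega}\ell(x,\sigma)^\alpha\, dx < \infty$ for $\alpha>-1$. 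Integrating in $\sigma$ gives $\ell^\alpha\in L^1(U)$. I expect no real obstacle here; the statement is essentially a one-line corollary of Theorem \ref{theo:santalo} once one records that $\ell$ decays linearly along the flow.
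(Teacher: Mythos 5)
Your proof is correct and takes essentially the same approach as the paper: apply Theorem \ref{theo:santalo} with $f = \ell^\alpha$, use the key identity $\ell(\Phi(u,t)) = \ell(u) - t$, and evaluate the inner integral. The additional check that $\ell^\alpha \in L^1(U)$ is a reasonable bit of extra care that the paper omits.
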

\begin{proof}
We take $f(u) = \ell(u)^\alpha$ in Theorem \ref{theo:santalo}. Note that for $u \in U^+$ and $t \in (0,\ell(u))$, $\ell(\Phi(u,t)) = \ell(u) - t$.  Thus
\[
\int_U \ell(u)^\alpha du = \int_{U^+} \int_0^{\ell(u)} (\ell(u)-t)^\alpha \, \cos u\, dt du = \frac{1}{\alpha+1} \int_{U^+} \ell(u)^{\alpha+1} \cos u\, du .
\]
This completes the proof. 
\end{proof}
Taking $\alpha=0$ yields the following commonly used expression:
\begin{coro}\label{coro:volume-santalo}
We have
\[
|\SS^{N-1}| |\Omega| = \int_{U^+} \ell(u) \cos u \, du. 
\]
\end{coro}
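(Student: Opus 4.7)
The plan is to apply Proposition \ref{prop:santalo-ell} with the specific choice $\alpha = 0$. With this choice, the integrand on the left becomes $\ell(u)^0 = 1$, so the left-hand side reduces to the total volume $\int_U du$ of the unit sphere bundle $U$. Since $U = \Omega \times \SS^{N-1}$ carries the product Riemannian structure inherited from $\RR^N \times \RR^N$, Fubini gives $\int_U du = |\Omega| \cdot |\SS^{N-1}|$. Meanwhile, the right-hand side of Proposition \ref{prop:santalo-ell} becomes $\frac{1}{1} \int_{U^+} \ell(u)^1 \cos u \, du = \int_{U^+} \ell(u) \cos u \, du$, matching the claimed expression.

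The only nontrivial point to verify is the identification of the measure $du$ on $U$ with the product of Lebesgue measure on $\Omega$ and the standard round measure on $\SS^{N-1}$. This is routine: the inclusion $U \hookrightarrow \RR^N \times \RR^N$ endows $U$ with the product metric $g_{\RR^N} \oplus g_{\SS^{N-1}}$ fiberwise, and the induced volume form is the product of the two volume forms. Once this is noted, the corollary follows in one line from the proposition. There is no real obstacle here, since the hypothesis $\alpha > -1$ of Proposition \ref{prop:santalo-ell} is trivially satisfied by $\alpha = 0$, and $\ell \in L^\infty(U)$ for $\Omega$ compact makes all integrals well-defined.
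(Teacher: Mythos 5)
Your proof is correct and matches the paper's approach exactly: the paper derives this corollary precisely by taking $\alpha = 0$ in Proposition \ref{prop:santalo-ell}, with $\int_U du = |\Omega|\,|\SS^{N-1}|$ following from the product structure $U = \Omega \times \SS^{N-1}$. Your additional remark verifying the product volume form is a sensible, if routine, elaboration.
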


\subsection{Integral estimates for Coulomb energies} We now show how Santal\'o's formula can be used to prove Theorem \ref{theo:est-for-R3-LD}. 

\begin{lemm}\label{lemm:D-est}
For $\Omega\subset \RR^3$ compact with smooth boundary we have
\[
\int_{U^+} \ell(u)^3 \cos u \, du \leq 12 D(\Omega). 
\]
\end{lemm}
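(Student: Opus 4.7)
The plan is to combine Santal\'o's formula (Proposition \ref{prop:santalo-ell}) with a pointwise bound relating the chord length $\ell(x,\sigma)$ to the Newtonian potential $v_\Omega(x)$.

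First I would apply Proposition \ref{prop:santalo-ell} with $\alpha = 2$ to rewrite the left-hand side as
\[
\int_{U^+} \ell(u)^3 \cos u\, du = 3 \int_U \ell(u)^2\, du.
\]
So it suffices to prove $\int_U \ell(u)^2\, du \leq 4 D(\Omega)$. Using Fubini and writing $u = (x,\sigma) \in \Omega \times \SS^2$, the left-hand side equals $\int_\Omega \int_{\SS^2} \ell(x,\sigma)^2\, d\sigma\, dx$.

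Next I would bound the inner integral in terms of $v_\Omega(x)$. Since the ray $\{x + t\sigma : 0 \leq t \leq \ell(x,\sigma)\}$ lies entirely in $\Omega$, we have the pointwise inequality
\[
\frac{\ell(x,\sigma)^2}{2} = \int_0^{\ell(x,\sigma)} t\, dt \leq \int_0^\infty \chi_\Omega(x+t\sigma)\, t\, dt.
\]
Integrating in $\sigma$ and recognizing polar coordinates centered at $x$ yields
\[
\frac{1}{2}\int_{\SS^2}\ell(x,\sigma)^2\, d\sigma \leq \int_{\SS^2}\int_0^\infty \chi_\Omega(x+t\sigma)\, t\, dt\, d\sigma = \int_\Omega \frac{dy}{|x-y|} = v_\Omega(x).
\]
The main (and only) step where information is lost is this inequality: equality holds along the ray only up to $\ell(x,\sigma)$, so sharpness requires $\Omega$ to be star-shaped with respect to every interior point, i.e., convex.

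Finally, integrating over $x \in \Omega$ and using \eqref{eq:defn-D} gives
\[
\int_U \ell(u)^2\, du \leq 2 \int_\Omega v_\Omega(x)\, dx = 2 \iint_{\Omega \times \Omega} \frac{dx\, dy}{|x-y|} = 4 D(\Omega),
\]
which, combined with the Santal\'o identity above, yields the stated estimate. No step should present a serious obstacle; the only delicate point is verifying the chord-length-to-potential inequality, which is immediate from the definition of $\ell$.
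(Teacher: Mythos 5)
Your proof is correct and essentially the same as the paper's: both rely on the polar-coordinate bound $\tfrac12\int_{\SS^2}\ell(x,\sigma)^2\,d\sigma \leq v_\Omega(x)$ integrated over $\Omega$, combined with Proposition \ref{prop:santalo-ell} for $\alpha=2$. The only difference is cosmetic — you invoke Santal\'o first and the pointwise estimate second, whereas the paper does the reverse.
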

\begin{proof}
For $x \in\Omega$, the volume form $dy$ in spherical coordinates centered at $x$ $(r,\sigma) \in (0,\infty)\times U_x$ is $r^2 dr d\sigma$ (where $d\sigma$ is the volume form on the unit sphere $U_x$). This gives
\[
\int_\Omega \frac{dy}{|x-y|} \geq \int_{U_{x}} \int_{0}^{\ell(x,\sigma)} r^{-1} r^{2} dr d\sigma = \frac 12 \int_{U_x} \ell(x,\sigma)^2 d\sigma .
\]
Integrating with respect to $x$, we obtain 
\begin{align*}
D(\Omega) & \geq \frac 14  \int_{ \Omega} \int_{U_{x}}\ell(x,\sigma)^{2} \, d\sigma dx\\
& = \frac 14 \int_{U} \ell(u)^{2} \, du\\
& = \frac {1}{12} \int_{U^{+}} \ell(u)^{3} \cos u \, du.
\end{align*}
We used Santal\'o's formula (in the form of Proposition \ref{prop:santalo-ell}) in the final equality above. 
\end{proof}
Using this we can now prove the first inequality in Theorem \ref{theo:est-for-R3-LD}. 
\begin{proof}[Proof of \eqref{eq:P-D-main-ineq}]
Corollary \ref{coro:volume-santalo} gives
\[
4\pi |\Omega| = \int_{U^+} \ell(u) \cos u \, du.
\]
Thus, using H\"older's inequality we find
\begin{align*}
(4\pi)^3 |\Omega|^3 & \leq  \left(\int_{U^+} \cos u \, du \right)^2\left( \int_{U^+} \ell(u)^3 \cos  u \, du\right) \\
& = P(\Omega)^2 \left( \int_{\SS^2_+} \bangle{e_3,\sigma} \, d\sigma \right)^2 \int_{U^+} \ell(u)^3 \cos  u \, du \\
& = \pi^2 P(\Omega)^2 \int_{U^+} \ell(u)^3 \cos  u \, du  \leq 12\pi^2 P(\Omega)^2 D(\Omega)
\end{align*}
where we used Lemma \ref{lemm:D-est} in the final step and wrote $\SS^2_+ = \{\sigma \in \SS^2 : \bangle{e_3,\sigma} > 0\}$. To complete the proof, we note that if equality held in H\"older's inequality then $\ell(u)=K$ for a.e.\ $u\in U^+$ and $K$ some fixed constant. This cannot occur, so the inequality at that step must be strict. This completes the proof. 
\end{proof}

\begin{lemm}
For $\Omega\subset \RR^3$ compact with smooth boundary 
\[
\int_{U^+} \ell(u)^2 \, du \leq 2 D^\partial(\Omega) .
\]
\end{lemm}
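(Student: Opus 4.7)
The plan is to mirror the proof of Lemma \ref{lemm:D-est}, but starting from boundary points rather than interior ones. The idea is that $D^\partial(\Omega) = \int_{\partial\Omega} v_\Omega(x)\, dx$, and for $x \in \partial\Omega$ we can write $v_\Omega(x)$ in spherical coordinates centered at $x$. Since $x$ lies on the boundary, rays in inward pointing directions $\sigma \in U_x^+$ enter $\Omega$ immediately and stay inside for $t \in (0,\ell(x,\sigma))$; restricting to this set of directions and distances yields a lower bound on $v_\Omega(x)$ in terms of $\ell(x,\sigma)^2$.

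Concretely, first I would fix $x \in \partial\Omega$ and write the Newtonian potential in spherical coordinates $(r,\sigma) \in (0,\infty) \times \SS^2$ centered at $x$, using $dy = r^2\, dr\, d\sigma$. Discarding the outward half of the sphere and the portions of inward rays past their first exit from $\Omega$ gives
\[
v_\Omega(x) = \int_\Omega \frac{dy}{|x-y|} \geq \int_{U_x^+} \int_0^{\ell(x,\sigma)} r^{-1} r^2\, dr\, d\sigma = \tfrac{1}{2} \int_{U_x^+} \ell(x,\sigma)^2\, d\sigma.
\]
Here $U_x^+$ denotes the hemisphere of inward unit vectors at $x$, matching the definition of $U^+$ preceding Theorem \ref{theo:santalo}.

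Next I would integrate this inequality over $x \in \partial\Omega$ using the expression $D^\partial(\Omega) = \int_{\partial\Omega} v_\Omega(x)\, dx$ from \eqref{eq:D-partial}, obtaining
\[
D^\partial(\Omega) \geq \tfrac{1}{2} \int_{\partial\Omega} \int_{U_x^+} \ell(x,\sigma)^2\, d\sigma\, dx = \tfrac{1}{2} \int_{U^+} \ell(u)^2\, du,
\]
which is exactly the claimed bound after multiplying by $2$.

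There is no real obstacle: unlike Lemma \ref{lemm:D-est}, Santal\'o's formula is not even needed here, because the domain of integration is already $U^+$ rather than $U$. The only mild subtlety is justifying that the spherical-coordinate rewriting of $v_\Omega(x)$ restricted to $U_x^+$ and $r < \ell(x,\sigma)$ is genuinely a lower bound --- but this is immediate since $\{x + r\sigma : \sigma \in U_x^+,\ 0 < r < \ell(x,\sigma)\} \subset \Omega$ and the integrand $1/|x-y|$ is nonnegative.
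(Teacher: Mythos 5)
Your proposal is correct and follows exactly the paper's argument: bound $v_\Omega(x)$ from below for $x \in \partial\Omega$ by restricting the spherical-coordinate integral to inward directions and radii up to $\ell(x,\sigma)$, then integrate over $\partial\Omega$ and identify the result with $\int_{U^+}\ell(u)^2\,du$. Your observation that Santal\'o's formula is not needed here (in contrast to Lemma \ref{lemm:D-est}) is also accurate.
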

\begin{proof}
For $x \in \partial\Omega$, we can use spherical coordinates centered at $x$ to write
\[
\int_{\Omega} \frac{dy}{|x-y|} \geq \int_{U_x^+} \int_0^{\ell(x,\sigma)} r^{-1} r^2 drd\sigma = \frac 12 \int_{U_x^+} \ell(x,\sigma)^2 d\sigma.
\]
Integrating with respect to $x\in\partial\Omega$ completes the proof.
\end{proof}
Using this we can now prove the second inequality in Theorem \ref{theo:est-for-R3-LD}. 
\begin{proof}[Proof of \eqref{eq:P-bdry-D-main-ineq}]
Corollary \ref{coro:volume-santalo} gives
\[
4\pi |\Omega| = \int_{U^+} \ell(u) \cos  u \, du.
\]
Thus, using H\"older's inequality we find
\begin{align*}
16\pi^2 |\Omega|^2 & \leq \left( \int_{U^+} \cos^2 u \, du \right) \left( \int_{U^+} \ell(u)^2 du \right) \\
& = P(\Omega) \left( \int_{\SS^2_+} \bangle{e_3,\sigma}^2 \, d\sigma \right) \left( \int_{U^+} \ell(u)^2 du \right) \\
& = \frac{2\pi}{3} P(\Omega)  \left( \int_{U^+} \ell(u)^2 du \right)  = \frac{4\pi}{3} P(\Omega) D^\partial(\Omega) 
\end{align*}
Rearranging this proves the inequality. It is easy to see that equality holds when $\Omega$ is a ball. Conversely, if equality holds in the application of H\"older's inequality we find that $\ell(u) = K \cos u$ for a.e.\ $u\in U^+$ so $\Omega$ must be a round ball. This completes the proof. 
\end{proof}

\section{The first variation of energy}\label{sec:first-var}

In this section we compute the first variation of $\cE(\cdot)$ and particular deduce the value of the Lagrange multiplier at a volume constrained critical point following the scaling method of \cite{BarbosaDoCarmo}. Such a calculation has appeared previously in several places in the literature including \cite[Theorem 3.3]{BonaciniCristoferi}, \cite[Lemma 12]{Frank:non-spherical-drops}, and \cite[Lemma 2]{Julin:analytic}. 

In this paper we write $H_{\partial\Omega}$ for the sum of the principal curvatures (not the average). 

\begin{lemm}\label{lemm:first-var}
If $\Omega\subset \RR^3$ is a compact set with smooth boundary that's stationary with respect to  $\cE(\cdot)$ for volume preserving variations then
\[
H_{\partial\Omega} + v_{\Omega} = \frac{2P(\Omega)}{3|\Omega|}  +   \frac{5D(\Omega)}{3|\Omega|} 
\]
along $\partial\Omega$. 
\end{lemm}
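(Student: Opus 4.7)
The plan is to use the standard scaling trick of Barbosa--do Carmo. I would start by recording the first variations of the two pieces of $\cE$ under an arbitrary smooth compactly supported vector field $X$ generating a flow $\phi_t$ with $\Omega_t = \phi_t(\Omega)$. The perimeter contributes
\[
\frac{d}{dt}\Big|_{t=0} P(\Omega_t) = \int_{\partial\Omega} H_{\partial\Omega}\, \langle X,\nu\rangle\, d\cH^2,
\]
with $\nu$ the outward unit normal and $H_{\partial\Omega}$ the sum of principal curvatures (as in the paper's convention). For the Coulomb term, differentiating under the integral sign and using the symmetry of the kernel $|x-y|^{-1}$ yields
\[
\frac{d}{dt}\Big|_{t=0} D(\Omega_t) = \int_{\partial\Omega} v_\Omega(x)\, \langle X,\nu\rangle\, d\cH^2(x),
\]
where the factor $\tfrac12$ in \eqref{eq:defn-D} cancels against the two symmetric boundary contributions. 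This second formula is the only nonroutine ingredient, and it is handled by a change of variables together with the locally integrable nature of the Newtonian kernel.

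Next, since $\Omega$ is stationary among variations preserving $|\Omega|$ -- equivalently, among $X$ satisfying $\int_{\partial\Omega}\langle X,\nu\rangle = 0$ -- the two formulas above combined with the Lagrange multiplier principle give that
\[
H_{\partial\Omega} + v_\Omega \equiv \lambda \quad \text{on } \partial\Omega
\]
for some constant $\lambda \in \RR$.

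Finally, to identify $\lambda$, I would apply the (non-volume-preserving) dilation $\phi_t(x) = (1+t)x$, whose generating vector field is $X(x)=x$. The scaling relations $P((1+t)\Omega) = (1+t)^2 P(\Omega)$ and $D((1+t)\Omega) = (1+t)^5 D(\Omega)$ give
\[
\frac{d}{dt}\Big|_{t=0}\cE(\Omega_t) \;=\; 2P(\Omega) + 5D(\Omega).
\]
On the other hand, by the variation formulas above together with $H_{\partial\Omega}+v_\Omega=\lambda$ and the divergence theorem $\int_{\partial\Omega}\langle x,\nu\rangle\, d\cH^2 = 3|\Omega|$, the same quantity equals $3\lambda|\Omega|$. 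Equating the two expressions yields
\[
\lambda = \frac{2P(\Omega)}{3|\Omega|} + \frac{5D(\Omega)}{3|\Omega|},
\]
which is exactly the claimed identity. I do not expect any real obstacle; the only place requiring care is the differentiation of $D$ to justify the boundary integral expression.
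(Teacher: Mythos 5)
Your proof is correct, and it follows the same Barbosa--do Carmo scaling philosophy as the paper, though the implementation differs slightly. The paper constructs a volume-preserving family by flowing $\partial\Omega$ at unit normal speed and composing with a compensating dilation $\eta_t$, then sets the $t$-derivative of $\cE$ to zero by stationarity and unwinds the scaling laws. You instead use the pure (non-volume-preserving) dilation $(1+t)\Omega$ and compute $\frac{d}{dt}\big|_{t=0}\cE((1+t)\Omega)$ in two ways: once via the homogeneity $P \mapsto (1+t)^2 P$, $D \mapsto (1+t)^5 D$, and once via the general first-variation formula $\int_{\partial\Omega}(H_{\partial\Omega}+v_\Omega)\langle X,\nu\rangle = \lambda \int_{\partial\Omega}\langle x,\nu\rangle = 3\lambda|\Omega|$. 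Your packaging is arguably cleaner, since it avoids introducing the auxiliary family $\tilde\Omega_t$ and the function $\eta_t$ and exploits the constancy of $H_{\partial\Omega}+v_\Omega$ directly inside the boundary integral; the two routes are of course equivalent and land on the same value of $\lambda$. The only step you flag as delicate -- the boundary-integral form of $\frac{d}{dt}D(\Omega_t)$, with the $\tfrac12$ absorbed by the kernel symmetry -- is correct and is the same ingredient the paper imports from Bonacini--Cristoferi.
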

\begin{proof}
The first variation of area implies that $H_{\partial\Omega} + v_{\Omega}=\lambda$ is constant along $\partial\Omega$. It thus remains to determine $\lambda$. For $t \in (-\delta,\delta)$ (for some $\delta>0$ small) we denote  $\tilde\Omega_t$ the family of regions obtained by flowing $\partial\Omega$ with unit normal speed (to the outside). Then, we choose $\eta_t$ so that $t\mapsto \eta_t \tilde\Omega_t$ has constant volume. Because $\frac{d}{dt}\big|_{t=0}|\tilde\Omega_t|=P(\Omega)$ we find 
\[
\frac{d}{dt}\Big|_{t=0} \eta_t  = - \frac{P(\Omega)}{3 |\Omega|}
\]
(using the first variation of volume). Thus
\[
\cE(\eta_t \tilde\Omega_t) = \eta_t^2 P(\tilde \Omega_t) + \eta_t^5 \frac 12 \iint_{\tilde\Omega_t\times \tilde\Omega_t} \frac{dxdy}{|x-y|} .
\]
Differentiating this at $t=0$ and using the first variation formulae from the proof of  \cite[Theorem 3.3]{BonaciniCristoferi} we find 
\[
0 = - \frac{2P(\Omega)^2}{3 |\Omega|} - \frac{5 P(\Omega)}{6|\Omega|} \iint_{\Omega\times \Omega} \frac{dxdy}{|x-y|} + \int_{\partial\Omega} (H_{\partial\Omega}(x)  + v_\Omega(x))dx. 
\]
Using $H_{\partial\Omega}(x)  + v_\Omega(x)=\lambda$ is constant, we have proven the assertion.
\end{proof}

The following result is not used in the proof of Theorem \ref{theo:roundness} but may be of some independent interest.

\begin{prop}\label{prop:mean-convex}
Suppose that $\Omega\subset \RR^3$ is a compact set with smooth boundary stationary with respect to $\cE(\cdot)$ for variations fixing $|\Omega| = V \leq \frac 4 3 \left(\frac{10}{3}\right)^{\frac 12} \approx 2.43$. Then $\partial\Omega$ is connected and has positive mean curvature $H_{\partial\Omega} > 0$. 
\end{prop}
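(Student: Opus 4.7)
The plan is to combine the first-variation identity in Lemma \ref{lemm:first-var} with the bathtub bound from Lemma \ref{lemm:riesz-Talenti} and the integral inequality \eqref{eq:P-D-main-ineq}. Setting $\lambda := \frac{2P(\Omega) + 5D(\Omega)}{3V}$ so that $H_{\partial\Omega}(x) = \lambda - v_\Omega(x)$ on $\partial\Omega$, and writing $R := (3V/(4\pi))^{1/3}$ so that $|B_R| = V$, Lemma \ref{lemm:riesz-Talenti} gives $v_\Omega \leq 2\pi R^2$. Thus it suffices to establish $\lambda > 2\pi R^2$.

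The isoperimetric inequality gives $P(\Omega) \geq 4\pi R^2$, while \eqref{eq:P-D-main-ineq} gives $D(\Omega) > \frac{16\pi V^3}{3P(\Omega)^2}$, so
\[
\lambda > f(P) := \frac{2P}{3V} + \frac{80\pi V^2}{9 P^2}.
\]
I would then minimize $f$ over the feasible range $P \geq 4\pi R^2$. The unconstrained critical point $P_0 = (80\pi V^3/3)^{1/3}$ yields the $V$-independent value $f(P_0) = P_0/V = (80\pi/3)^{1/3}$, and a short computation shows $P_0 \geq 4\pi R^2$ exactly when $V \geq 27/20$. In that range the inequality $f(P_0) \geq 2\pi R^2$ rearranges to $V^2 \leq 160/27$, i.e.\ $V \leq \frac{4}{3}\sqrt{10/3}$, matching the stated threshold exactly; for smaller $V$, $f$ is minimized on the boundary $P = 4\pi R^2$ and produces a strictly weaker constraint. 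The strict inequality in \eqref{eq:P-D-main-ineq} upgrades $H \geq 0$ to $H > 0$.

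To handle connectedness of $\partial\Omega$, I would first rule out bounded components of $\RR^3 \setminus \Omega$. If $W$ were such a component (a ``hole''), then $\partial W$ would be a component of $\partial\Omega$ along which the outward normal of $\Omega$ points into $W$. With respect to $W$'s own outward normal, the mean curvature of $\partial W$ would therefore be strictly negative everywhere. But any bounded smooth domain has a point of strictly positive outward mean curvature: at a point where a maximal inscribed ball of radius $r$ touches $\partial W$, tangential comparison with the inscribed sphere forces the outward mean curvature of $\partial W$ there to be at least $2/r > 0$. This contradiction rules out holes.

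The main obstacle is ruling out that $\Omega$ itself has two or more disjoint compact components. The naive translation-variation argument does not close, since stationarity already forces the net Newtonian force $\iint_{\Omega_1 \times \Omega_2} \frac{x-y}{|x-y|^3} \, dx\, dy$ between two components to vanish automatically (via $\int_{\partial \Omega_j} v_\Omega \nu \, dA = \int_{\Omega_j} \nabla v_\Omega \, dx$ combined with $H + v_\Omega = \lambda$). My proposed approach is to integrate $H + v_\Omega = \lambda$ componentwise and combine a Minkowski-type lower bound $\int_{\partial \Omega_j} H \, dA \geq \sqrt{16\pi P(\Omega_j)}$ on each mean-convex component with the upper bound $\int_{\partial\Omega} H \, dA = \lambda P - D^\partial(\Omega) \leq \lambda P - 12\pi V^2/P$ obtained from \eqref{eq:P-bdry-D-main-ineq}, and then exploit the strict super-additivity $\sum_j \sqrt{P(\Omega_j)} > \sqrt{P(\Omega)}$ for more than one component to force a quantitative contradiction in the stated range of $V$. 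Verifying that the requisite Minkowski inequality applies under mere mean-convexity (rather than full outer-minimality) of each component is the most delicate point.
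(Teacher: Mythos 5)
Your mean-convexity argument is correct and is essentially the paper's argument. The paper applies the three-term AM--GM $\frac{P}{3V} + \frac{P}{3V} + \frac{80\pi V^2}{9P^2} \geq \left(\frac{80\pi}{3}\right)^{\frac13} = 2\left(\frac{10\pi}{3}\right)^{\frac13}$, which is exactly the value $f(P_0) = P_0/V$ at your unconstrained critical point $P_0 = V(80\pi/3)^{1/3}$; the paper's bound holds for all $P > 0$, so your case split on whether $P_0 \geq 4\pi R^2$ is unnecessary but harmless. Your sphere-touching argument that rules out bounded components of $\RR^3\setminus\Omega$ (holes) is also the same as the paper's: touching $\Sigma=\partial\Gamma$ from inside the hole $\Gamma$ by a maximal inscribed ball is the same as the paper's ``touching $\Sigma$ from the outside'' (relative to $\Omega$) by a sphere, giving a point of negative mean curvature with respect to $\Omega$ and contradicting $H_{\partial\Omega}>0$.

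The one point where you go beyond the paper is the case of $\Omega$ itself having several disjoint components (side-by-side drops). Your observation that the naive translation argument doesn't close is a genuine and correct point: the Euler--Lagrange equation $H + v_\Omega = \lambda$ already forces $\int_{\partial\Omega_j} v_\Omega \nu\,dA = 0$, since $\int_{\partial\Omega_j}\nu\,dA = \int_{\partial\Omega_j}H\nu\,dA = \int_{\Omega_j}\nabla v_{\Omega_j}\,dx = 0$. However, your proposed fix -- applying a Minkowski-type inequality componentwise together with \eqref{eq:P-bdry-D-main-ineq} and super-additivity of $\sqrt{P}$ -- is not a proof: it requires a Minkowski inequality under mere mean-convexity, which (as the paper itself notes after Theorem~\ref{theo:huisken-mink}) is a conjecture, not a theorem, so the ``delicate point'' you flag is actually a wall. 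You should be aware that the paper's own proof also does not resolve this case: the topological claim that disconnected $\partial\Omega$ forces a bounded component of $\RR^3\setminus\Omega$ is only true for connected $\Omega$ (it fails for two disjoint balls), so the paper's argument really only excludes holes. In short, your proof of mean-convexity and of the absence of holes is correct and matches the paper; your attempt to also exclude multiple components is incomplete, but that part is also not settled by the paper's argument as written.
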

\begin{proof}
Write $|\Omega| = V = |B_R|$. Combining Lemmas \ref{lemm:riesz-Talenti} and \ref{lemm:first-var} with \eqref{eq:P-D-main-ineq} we find
\begin{align*}
H_{\partial\Omega} + 2\pi R^2 
& >  \frac{2P(\Omega)}{3|\Omega|}  +  \frac{80 \pi|\Omega|^2}{9P(\Omega)^2} \geq 2 \left( \frac{10\pi}{3}\right)^{\frac 13},
\end{align*}
where we used AM-GM in the last step. Thus, $H_{\partial\Omega}>0$ for $R \leq \left( \frac{10}{3\pi^2}\right)^{\frac 16}$. This completes the proof of mean-convexity.

Finally, if $\partial\Omega$ has more than one component, then there will be some component $\Sigma$ so that $\Sigma = \partial\Gamma$ for a bounded open component of $\RR^3\setminus\Omega$. By touching $\Sigma$ from the outside by a sphere, we see that the mean curvature of $\Sigma$ is positive at some $p\in\Sigma$, when measured with respect to $\Gamma$.  However, this means that the mean curvature at $p$ when measured with respect to $\Omega$ is negative. This contradicts the above estimate.
\end{proof}

\section{Roundness via the Minkowski inequality} \label{sec:Mink}
Suppose that $\Omega\subset \RR^3$ is a compact set with smooth boundary. We say that $\Omega$ satisfies the \emph{Minkowski inequality} if 
\begin{equation}\label{eq:Mink}
(16\pi P(\Omega))^{\frac 12} \leq \int_{\partial\Omega} H_{\partial\Omega}(x)dx . 
\end{equation}
Minkowski proved \cite{Minkowski} that if $\Omega$ is convex then \eqref{eq:Mink} holds. We will need the following result of Huisken (see \cite[Corollary 1.3]{AgostinianiFogagnoloMazzieri} for a proof):
\begin{theo}\label{theo:huisken-mink}
If $\Omega\subset\RR^3$ is a compact outer-minimizing set with smooth boundary then $\Omega$ satisfies the Minkowski inequality \eqref{eq:Mink}. If equality holds then $\Omega$ is a round ball. 
\end{theo}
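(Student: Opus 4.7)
The plan is to prove \eqref{eq:Mink} by running inverse mean curvature flow (IMCF) starting from $\Sigma_0 = \partial\Omega$, i.e., the flow with outward normal speed $1/H_{\Sigma_t}$. The engine is a Geroch-type monotonicity computation. Under smooth IMCF in $\RR^3$, the area grows as $\tfrac{d}{dt}|\Sigma_t| = |\Sigma_t|$, and the evolution of the mean curvature combined with the divergence theorem gives
\[
\frac{d}{dt} \int_{\Sigma_t} H_{\Sigma_t}\, d\mu_t = \int_{\Sigma_t} \left( H_{\Sigma_t} - \frac{|A|^2}{H_{\Sigma_t}} \right) d\mu_t.
\]
Combining with the pointwise bound $|A|^2 \geq \tfrac 12 H_{\Sigma_t}^2$ (Cauchy--Schwarz on the principal curvatures, with equality exactly at umbilic points) yields $\tfrac{d}{dt}\int H\, d\mu \leq \tfrac 12 \int H\, d\mu$. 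Hence the Minkowski quotient
\[
Q(t) := \frac{\int_{\Sigma_t} H_{\Sigma_t}\, d\mu_t}{\sqrt{|\Sigma_t|}}
\]
is non-increasing in $t$.

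Next I would analyze the limit $t\to\infty$: the rescaled flow $e^{-t/2}\Sigma_t$ becomes asymptotically round, so $Q(t)$ converges to $\sqrt{16\pi}$, the value realized by any round sphere. Monotonicity then gives $Q(0) \geq \sqrt{16\pi}$, which is exactly \eqref{eq:Mink}. For rigidity, equality at $t=0$ propagates along the monotone flow, forcing $|A|^2 \equiv \tfrac 12 H^2$ pointwise on $\partial\Omega$, so every point is umbilic and $\partial\Omega$ must be a round sphere.

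The main obstacle is that smooth IMCF generically develops singularities (for instance once the evolving surface becomes non-convex, or a neck pinches), so the calculation above cannot run classically all the way to $t=\infty$. This is precisely where the outer-minimizing hypothesis enters, via Huisken--Ilmanen's weak level-set formulation of IMCF: one solves $\Div(Du/|Du|) = |Du|$ weakly for a proper function $u$ on $\RR^3\setminus\Omega$ and sets $\Sigma_t = \partial\{u < t\}$. The weak flow is allowed to \emph{jump}, replacing $\Sigma_t$ by its outer-minimizing hull; at any such jump the area is preserved while $\int H\, d\mu$ can only decrease (since the jumped-over boundary piece is minimal where it has left the old surface), so the monotonicity of $Q$ survives across jumps. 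The assumption that $\Omega$ is outer-minimizing is exactly what ensures no jump occurs at $t=0$, so $Q(0)$ is literally the right-hand side of the stated inequality divided by $\sqrt{P(\Omega)}$. The substantive technical burden is then in defining $\int H\, d\mu$ distributionally on non-smooth hulls, proving the asymptotic-roundness claim used to evaluate $\lim_{t\to\infty} Q(t)$, and propagating the umbilic rigidity through jumps; this is the core of Huisken--Ilmanen, while Agostiniani--Fogagnolo--Mazzieri provide an alternative route that bypasses IMCF by studying the $p\to 1^+$ limit of $p$-capacitary potentials in $\RR^3\setminus\Omega$.
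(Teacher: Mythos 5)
The paper does not give a proof of this statement; it attributes the result to Huisken and points the reader to Agostiniani--Fogagnolo--Mazzieri (Corollary 1.3 there) for a published proof. That reference bypasses inverse mean curvature flow entirely, instead deriving monotonicity formulas along level sets of $p$-capacitary potentials on $\RR^3\setminus\Omega$ and sending $p\to 1^+$. Your sketch is the other standard route --- the one Huisken announced --- via weak IMCF and Geroch-type monotonicity of the Minkowski quotient, so you have taken a genuinely different (but legitimate) path from the one the paper cites. The mechanism you describe is correct: under smooth IMCF, $\frac{d}{dt}|\Sigma_t| = |\Sigma_t|$ and $\frac{d}{dt}\int_{\Sigma_t} H\,d\mu_t = \int_{\Sigma_t}\bigl(H - |A|^2/H\bigr)\,d\mu_t \le \frac12 \int_{\Sigma_t} H\,d\mu_t$ via $|A|^2 \ge \frac12 H^2$, so $Q(t) = \int_{\Sigma_t} H\,d\mu_t / \sqrt{|\Sigma_t|}$ is non-increasing; the sphere value $\sqrt{16\pi}$ is attained in the round limit; and the outer-minimizing hypothesis ensures the weak flow starts at $\partial\Omega$ rather than jumping immediately to its strictly minimizing hull, so $Q(0)$ really is $\int_{\partial\Omega}H\,dx/\sqrt{P(\Omega)}$. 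Two places deserve more care than your sketch gives. First, the claim that $\int H$ can only drop at a jump: the clean way to see it is that every weak-flow surface is the boundary of a minimizing hull and hence has $H\ge 0$ (in the appropriate weak sense), while the freshly created pieces after a jump are minimal, so the jump deletes pieces with $H\ge 0$ and inserts pieces with $H=0$, lowering $\int H$. Second, the asymptotic roundness needed to evaluate $\lim_{t\to\infty}Q(t)=\sqrt{16\pi}$ is not established in the original Huisken--Ilmanen paper and requires separate work. These are precisely the technical burdens that the reference the paper cites avoids by taking the nonlinear-potential-theoretic path, which yields the same inequality and the same rigidity.
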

It's conjectured that ``outer-minimizing'' can be relaxed to ``mean-convex.'' See \cite{GuanLi,DalphinHenrotMasnouTakahashi,Glaudo,CEK,Brendle} for related results.

\begin{prop}\label{prop:mink-implies-roundness-min}
Suppose that a compact set $\Omega\subset \RR^3$ has smooth boundary and minimizes $\cE(\cdot)$ among measurable regions with fixed volume $|\Omega| = V \leq 1$. If $\Omega$ satisfies the Minkowski inequality \eqref{eq:Mink} then $\Omega$ agrees with a round ball. 
\end{prop}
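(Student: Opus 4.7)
The strategy is to combine the first variation identity (Lemma \ref{lemm:first-var}, integrated over $\partial\Omega$) with the assumed Minkowski inequality \eqref{eq:Mink} and the Santal\'o-type estimate \eqref{eq:P-bdry-D-main-ineq} to produce a single inequality relating $P(\Omega)$ and $D(\Omega)$; then eliminating $D(\Omega)$ via the energy-minimality $\cE(\Omega) \leq \cE(B_R)$ reduces the problem to a one-variable inequality in $P(\Omega)$ alone, which I will show forces $P(\Omega) = P(B_R)$. Equality in the Santal\'o estimate \eqref{eq:P-bdry-D-main-ineq} then identifies $\Omega$ as a round ball via Theorem \ref{theo:est-for-R3-LD}.

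Write $P = P(\Omega)$, $D = D(\Omega)$, $V = |\Omega| = \tfrac{4\pi R^3}{3}$, $P_0 = 4\pi R^2$, $D_0 = \tfrac{16\pi^2 R^5}{15}$. By Lemma \ref{lemm:first-var}, the pointwise identity $H_{\partial\Omega}+v_\Omega = (2P+5D)/(3V)$ holds on $\partial\Omega$, so integrating and recalling \eqref{eq:D-partial} gives
\[
\frac{P(2P+5D)}{3V} \;=\; \int_{\partial\Omega}H_{\partial\Omega}\,dx \,+\, D^\partial(\Omega) \;\geq\; \sqrt{16\pi P} \,+\, \frac{12\pi V^2}{P},
\]
where the inequality applies \eqref{eq:Mink} and \eqref{eq:P-bdry-D-main-ineq}. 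Multiplying by $3V/P$ and using $D \leq (P_0+D_0) - P$ (from $\cE(\Omega) \le \cE(B_R)$) to eliminate $D$, this rearranges to $\psi(P) \geq 0$, where
\[
\psi(P) \;:=\; 5(P_0+D_0) \,-\, 3P \,-\, 12\sqrt{\pi}\,V\,P^{-1/2} \,-\, 36\pi V^3 P^{-2}.
\]

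A direct substitution of the values of $P_0$, $V$, $D_0$ shows $\psi(P_0) = 0$ and $\psi'(P_0) = 2(V-1)$, while
\[
\psi''(P) \;=\; -9\sqrt{\pi}\,V\,P^{-5/2} \,-\, 216\pi V^3 P^{-4}
\]
is strictly negative for all $P>0$. Hence for $V \leq 1$ we have $\psi'(P_0) \leq 0$, and strict concavity of $\psi$ forces $\psi'(P) < 0$, and therefore $\psi(P) < \psi(P_0) = 0$, for all $P > P_0$. Since the isoperimetric inequality gives $P \geq P_0$, the constraint $\psi(P) \geq 0$ collapses to $P = P_0$, so every inequality in the derivation must be an equality; in particular equality holds in \eqref{eq:P-bdry-D-main-ineq}, and Theorem \ref{theo:est-for-R3-LD} identifies $\Omega$ as a round ball.

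The only genuine computation is verifying the identities $\psi(P_0)=0$ and $\psi'(P_0)=2(V-1)$, which are straightforward with careful bookkeeping. The point worth noting is that the threshold $V \le 1$ enters in a sharp manner in this approach: $\psi'(P_0)$ vanishes exactly at $V=1$, leaving no margin, which is consistent with the introduction's remark that Theorem \ref{theo:roundness} can be extended to volumes $\leq 1+\varepsilon$ only via an ineffective compactness argument.
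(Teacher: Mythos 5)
Your proof is correct and follows essentially the same strategy as the paper's: combine the first variation identity, the assumed Minkowski inequality, the Santal\'o-type estimate \eqref{eq:P-bdry-D-main-ineq}, and the minimality bound $D(\Omega)\leq \cE(B_R)-P(\Omega)$ into a single one-variable inequality $\psi(P)\geq 0$, and then use $\psi(P_0)=0$, $\psi'(P_0)\leq 0$ (exactly when $V\leq 1$), and concavity to force $P=P_0$. In fact $\psi(P)=\tfrac{3V}{P}f_R(P)$, where $f_R$ is the function used in the paper; your choice of normalization (dividing through by $P$) is a small but genuine improvement, since $\psi''(P)=-9\sqrt{\pi}VP^{-5/2}-216\pi V^3 P^{-4}<0$ is manifestly negative on all of $(0,\infty)$, whereas the paper has to verify $f_R''(x)<0$ only on $[4\pi R^2,\infty)$ by estimating the positive middle term.
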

\begin{proof}
Write $|\Omega| = V = \frac 4 3 \pi R^3$. Lemma \ref{lemm:first-var} yields 
\[
\int_{\partial\Omega} H_{\partial\Omega}(x) dx + D^\partial(\Omega) = \frac{2P(\Omega)^2}{3|\Omega|} + \frac{5 P(\Omega)}{3|\Omega|} D(\Omega)
\]
Combining this with the Minkowski inequality \eqref{eq:Mink} (assumed to hold) and the isoperimetric-type inequality from \eqref{eq:P-bdry-D-main-ineq}, we find 
\[
(16\pi P(\Omega))^{\frac 12} + \frac{12\pi |\Omega|^2}{P(\Omega)} \leq \frac{2P(\Omega)^2}{3|\Omega|} + \frac{5 P(\Omega)}{3|\Omega|} D(\Omega)
\]
Because $\Omega$ minimizes $\cE(\cdot)$ among sets of the same volume, we have $D(\Omega) \leq \cE(B_R) - P(\Omega)$. Thus, we arrive at the expression
\begin{equation}\label{eq:roundness-geo-expression}
(16\pi P(\Omega))^{\frac 12} + \frac{12\pi |\Omega|^2}{P(\Omega)} \leq  \frac{5 P(\Omega)}{3|\Omega|} \cE(B_R) - \frac{ P(\Omega)^2}{|\Omega|}. 
\end{equation}
Note that $x : = P(\Omega) \geq 4\pi R^2$ by the isoperimetric inequality with equality only for the round ball. We will show that \eqref{eq:roundness-geo-expression} implies that $x = 4\pi R^2$. Using
\[
\cE(B_R) = 4\pi R^2 + \frac{16\pi^2}{15}R^5
\]
the inequality \eqref{eq:roundness-geo-expression} can be written as
\[
f_R(x) := - \frac{3}{4\pi} R^{-3} x^2 + \left( 5R^{-1} + \frac 4 3 \pi R^2 \right) x  - 4 \pi^{\frac 12} x^{\frac 12} - \frac{64\pi^3}{3} R^6 x^{-1} \geq 0.
\]
We will check that $f_R(4\pi R^2) = 0$, $f'(4\pi R^2) \leq 0$ (as long as $V \leq 1$), and $f''_R(x) <0$ for $x \geq 4\pi R^2$. From this, we see that $x = 4\pi R^2$ as desired. 

Since all inequalities used above hold as equalities at the round ball, we deduce that $f_R(4\pi R^2) = 0$. Next, we compute
\[
f'_R(4\pi R^2) = - 2 R^{-1} + \frac{8\pi}{3} R^2 
\]
so $f'_R(4\pi R^2)\leq 0$ for $V = \frac43\pi R^3 \leq 1$. Finally, we compute
\begin{align*}
f''_R(x) & = - \frac{3}{2\pi} R^{-3}+ \pi^{\frac 12} x^{-\frac 32}  - \frac{128 \pi^3}{3} R^6 x^{-3} .
\end{align*}
Using $x^{-\frac 32} \leq (4\pi R^2)^{-\frac 32}$, we can estimate
\[
- \frac{3}{2\pi} R^{-3}+ \pi^{\frac 12} x^{-\frac 32} \leq - \frac{11}{8\pi} R^{-3},
\]
implying that $f''_R(x) < 0$ for $x\geq 4\pi R^2$. This completes the proof.
\end{proof}

\section{Proof of Theorem \ref{theo:roundness}} \label{sec:roundness}

Recall the definition of $E(V)$ in \eqref{eq:profile}. The map $V\mapsto E(V)$ is continuous (cf.\ \cite[p.\ 4444]{FrankLieb:compact-least-density}). Let $E_B(V) : = \cE(B_{(\frac{3V}{4\pi})^{\frac 13}})$ denote the profile corresponding to the energy of the round ball of volume $V$ (clearly $E_B(V)$ is also continuous). Let 
\begin{equation}\label{eq:definVB}
V_B : = \sup\{V >0 : E(V') = E_B(V') \textrm{ for all $V'\leq V$}\}.
\end{equation}
By \cite[Theorem 3.2]{KnupferMuratov2} (cf.\ \cite{BonaciniCristoferi,Julin,FigalliFuscoMaggiMillot}) we have that $V_B>0$. Moreover, by \cite[Theorem 2.10]{BonaciniCristoferi} if $V<V_B$ then the ball uniquely minimizes $\cE(\cdot)$ for fixed volume $V$ (and minimizes for $V=V_B$).

\begin{lemm}\label{lemm:cont-method}
If $V_B<V_*$ there is $\Omega\subset \RR^3$ compact with smooth boundary so that $|\Omega|=V_B$, $\cE(\Omega) = E(V_B)$ but $\Omega$ is not a round ball. 
\end{lemm}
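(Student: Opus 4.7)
The plan is to obtain $\Omega$ as a subsequential $L^1$-limit of minimizers at volumes approaching $V_B$ from above, and then use strict local minimality of the ball to force the limit to be non-round. By continuity of $E$ and $E_B$ together with \eqref{eq:definVB}, the set $\{V : E(V') = E_B(V') \text{ for all } V' \leq V\}$ is closed and downward-closed, hence equals $(0, V_B]$. Consequently, for every $\eta > 0$ there exists $V' \in (V_B, V_B + \eta]$ with $E(V') < E_B(V')$. Using $V_B < V_*$, I extract a sequence $V_n \searrow V_B$ with $V_n \in (V_B, V_*)$ and $E(V_n) < E_B(V_n)$. The existence theorem of Frank--Nam \cite{FrankNam:existence-nonexistence} furnishes a minimizer $\Omega_n$ at each volume $V_n$; standard regularity \cite{KnupferMuratov1, KnupferMuratov2} makes each $\partial\Omega_n$ smooth, and no $\Omega_n$ is a round ball since $\cE(\Omega_n) < E_B(V_n)$.

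Next I would pass to a subsequential limit. The uniform bound $\cE(\Omega_n) < E_B(V_n)$ controls $P(\Omega_n)$, and the concentration-compactness analysis that underlies \cite{FrankNam:existence-nonexistence} (valid throughout $V < V_*$) prevents mass escape at infinity. After translating, a subsequence of $\chi_{\Omega_n}$ converges in $L^1(\RR^3)$ to $\chi_\Omega$ with $|\Omega| = V_B$. Lower semicontinuity of $P$ and continuity of $D$ along this convergence yield
\[
\cE(\Omega) \leq \liminf_{n\to\infty} \cE(\Omega_n) = \lim_{n\to\infty} E(V_n) = E(V_B),
\]
using continuity of $E$, so $\Omega$ is a minimizer at $V_B$; standard $\cE$-regularity then makes $\Omega$ compact with smooth boundary.

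It remains to show that $\Omega$ is not a round ball. Denote by $R(V)$ the radius with $|B_{R(V)}| = V$. The key ingredient is that the round ball is a strict $L^1$-isolated local minimizer of $\cE$ at fixed volume, \emph{uniformly} across a small interval of volumes around $V_B$: from the second-variation analysis of Bonacini--Cristoferi \cite{BonaciniCristoferi}, one obtains $\delta, \eta > 0$ so that for all $V \in (V_B - \eta, V_B + \eta)$ and all measurable $\Omega'$ with $|\Omega'| = V$ and $0 < \inf_z |\Omega' \triangle (B_{R(V)} + z)| < \delta$, one has $\cE(\Omega') > \cE(B_{R(V)})$. For $n$ large, $V_n \in (V_B - \eta, V_B + \eta)$ and $\cE(\Omega_n) < \cE(B_{R(V_n)})$, so the fact that $\Omega_n$ is not a ball forces $\inf_z |\Omega_n \triangle (B_{R(V_n)} + z)| \geq \delta$. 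Passing to the $L^1$-limit gives $\inf_z |\Omega \triangle (B_{R(V_B)} + z)| \geq \delta$, so $\Omega$ is not a round ball. The main obstacle is precisely establishing the \emph{uniform} strict local minimality on an open interval of volumes rather than at $V_B$ alone: this reduces to checking that $V_B$ lies strictly below the volume at which the second variation of $\cE$ at the ball first degenerates, which is comfortable since that stability threshold exceeds $V_*$ and $V_B < V_*$ by hypothesis.
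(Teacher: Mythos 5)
Your proposal takes essentially the same route as the paper: extract a sequence of non-round minimizers $\Omega_n$ at volumes $V_n\searrow V_B$, pass to an $L^1$-limit using lower semicontinuity of perimeter and continuity of $D$ to see the limit minimizes at $V_B$, invoke regularity, and then use strict ($L^1$-local) stability of the ball at nearby volumes to exclude the possibility that the limit is round. The two minor deviations are (i) for tightness you appeal to the concentration-compactness structure of Frank--Nam, whereas the paper cites the uniform essential boundedness lemma of Kn\"upfer--Muratov \cite[Lemma 7.2]{KnupferMuratov2}, and (ii) you spell out the non-roundness step via an explicit uniform $(\delta,\eta)$ local-minimality statement, whereas the paper compresses this into a reference to strict stability and the proof of \cite[Theorem 2.10]{BonaciniCristoferi}; both are equivalent in substance and your observation that the stability threshold of the ball exceeds $V_*$ is correct and is exactly what makes this step work.
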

\begin{proof}
By  \cite[Theorem 1]{FrankNam:existence-nonexistence} and the definition of $V_B$ there are $\Omega_j\subset \RR^3$ that minimize $\cE(\Omega_j)$ among sets of fixed volume so that $|\Omega_j| \searrow V_B$ but $\Omega_j$ are not round balls. By \cite[Lemma 7.2]{KnupferMuratov2} we may assume that the sets $\Omega_j$ are uniformly essentially bounded. Thus, by standard compactness results for sets of finite perimeter (cf.\ \cite[\S 2.6]{Simon:GMT}) we can pass to a subsequence with $\Omega_j \to \Omega$ in $L^1$ and $P(\Omega) \leq \limsup_{j\to\infty} P(\Omega_j)$. As in \cite[Lemma 2.3]{FrankLieb:compact-least-density} we have $D(\Omega_j)\to D(\Omega)$. Thus, by continuity of $E(\cdot)$ we find
\[
\cE(\Omega) \leq E(V_B),
\]
so $\Omega$ is a minimizer of $\cE(\cdot)$ among sets of volume $V_B$. By \cite[Theorem 2.7]{BonaciniCristoferi}, we can assume that $\Omega$ is compact with  smooth boundary. 

Suppose that $\Omega$ was a round ball (up to a set of measure zero). Then, the strict stability of the ball (in the range of volumes under consideration) implies that $\Omega_j$ must also be a round ball for $j$ sufficiently large (see the proof of \cite[Theorem 2.10]{BonaciniCristoferi}). This contradicts our assumption that the $\Omega_j$ were not round balls. This completes the proof. 
\end{proof}

With these preparations we can now finish the proof of our main theorem. Note that the proof below actually proves that there is $\eps>0$ so that round balls of volume $\leq 1+\eps$ uniquely minimize $\cE(\cdot)$ among competing regions of fixed volume. 
\begin{proof}[Proof of Theorem \ref{theo:roundness}]
As observed above, it suffices to prove that $V_B$ (defined in \eqref{eq:definVB}) satisfies $V_B > 1$. For the sake of contradiction, assume that $V_B \leq 1 < V_*$. By Lemma \ref{lemm:cont-method} there exists $\Omega\subset \RR^3$ compact with smooth boundary so that $|\Omega|=V_B$, $\cE(\Omega) = E(V_B)$, but $\Omega$ is not a round ball. 

Let $R= \left(\frac{3V_B}{4\pi}\right)^{\frac 13}$ so that $|\Omega| = |B_R|$ and $\cE(\Omega)=\cE(B_R)$. Because we are assuming that $V_B\leq 1$, the condition $\cE(\Omega)=\cE(B_R)=E(V_B)$ allows us to apply Proposition \ref{prop:outer-min} to conclude that $\Omega$ is outer-minimizing. Thus, by Huisken's Theorem \ref{theo:huisken-mink}, $\Omega$ satisfies the Minkowski inequality \eqref{eq:Mink}. Since $V_B\leq1$ we can then apply Proposition \ref{prop:mink-implies-roundness-min} to conclude that $\Omega$ is a round ball. This is a contradiction.
\end{proof}

\appendix

\section{Proof of Theorem \ref{theo:santalo}}\label{proof:stantalo}

Write $U^\circ$ for the interior of $U$ and
\[
\tilde U : = \{(u,t) \in U^+ \times (0,\infty) : t < \ell(u)\}. 
\]
Then, we define 
\[
\Phi : \tilde U \to U^\circ, \qquad \Phi : ((x,\sigma),t) \mapsto (x+t\sigma,\sigma).
\]
The map $\Phi$ is a diffeomorphism onto its image and $U^\circ\setminus \Phi(\tilde U)$ has measure zero. We now compute the Jacobian of $\Phi$ at $p : = ((x,\sigma),t) \in T\tilde U$. We can assume that $T_x\partial\Omega = \RR^{N-1}\times \{0\}$. Note that (using the splitting $T_p\tilde U = T_x\partial\Omega \times T_\sigma \SS^{N-1}\times \RR$ and $T_{\Phi(p)}U = T_{x+t\sigma}\RR^N \times T_\sigma \SS^{n-1}$):
\begin{align*}
d\Phi_p(v,0,0) & = (v,0)\\
d\Phi_p(0,w,0) & = (tw,w)\\
d\Phi_p(0,0,1) & = (\sigma,0).
\end{align*}
In particular, if we choose an orthonormal basis $w_1,\dots,w_{N-1}$ for $T_\sigma \SS^{N -1}$ and use $e_1,\dots,e_N$ for the standard basis on $\RR^N$ we find that the matrix of $d\Phi_p$ becomes 
\[
\left( \begin{matrix} \Id_{(N-1)\times (N-1)} & * & *   \\
0_{1\times (n-1)} & t & \bangle{\sigma,e_N} \\
0_{(N-1) \times (N-1)} & \Id_{(N-1)\times (N-1)} & 0_{(N-1)\times 1} \end{matrix} \right)
\]
which has determinant $\pm \bangle{\sigma,e_N} = \pm \cos u$. The assertion thus follows from the change of variables formula.

\section{Binding energy per particle} \label{sec:binding} 
We note that the sharp constant in \eqref{eq:P-D-main-ineq} is closely related to a question posed by Frank--Lieb \cite{FrankLieb:compact-least-density} concerning the quantity
\begin{equation}\label{eq:min-binding}
e_* : = \min_{\Omega\subset \RR^3} \frac{\cE(\Omega)}{|\Omega|} 
\end{equation}
where the minimum is taken over all measurable sets with $0<|\Omega|<\infty$. By \cite[Remark 3.6]{FrankLieb:compact-least-density} the sharp constant in \eqref{eq:P-D-main-ineq} is closely related to $e_*$:
\begin{lemm}\label{lem:estar-ineq}
For $\Omega\subset \RR^3$ measurable we have
\[
|\Omega|^3 \leq\frac{27}{4e_*^3} P(\Omega)^2 D(\Omega) .
\]
Equality holds if and only if $\cE(\Omega) = e_*|\Omega|$. 
\end{lemm}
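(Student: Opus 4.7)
My plan is a scaling argument: the quantities $P$, $D$, $|\Omega|$ rescale homogeneously, and the inequality is scale-invariant, so I can optimize over dilations.

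First I would observe that $P(\lambda\Omega)=\lambda^2 P(\Omega)$, $D(\lambda\Omega)=\lambda^5 D(\Omega)$, and $|\lambda\Omega|=\lambda^3|\Omega|$ for $\lambda>0$, so
\[
\frac{\cE(\lambda\Omega)}{|\lambda\Omega|} = \lambda^{-1}\frac{P(\Omega)}{|\Omega|} + \lambda^2 \frac{D(\Omega)}{|\Omega|}.
\]
Differentiating in $\lambda$ shows the minimum over $\lambda>0$ is attained at $\lambda_0^3 = P(\Omega)/(2D(\Omega))$, and a direct substitution gives
\[
\min_{\lambda>0} \frac{\cE(\lambda\Omega)}{|\lambda\Omega|} = \frac{3}{2^{2/3}} \cdot \frac{P(\Omega)^{2/3} D(\Omega)^{1/3}}{|\Omega|}.
\]

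Next I would invoke the definition \eqref{eq:min-binding} of $e_*$ applied to the one-parameter family $\{\lambda\Omega\}_{\lambda>0}$, which yields
\[
e_* \leq \frac{3}{2^{2/3}} \cdot \frac{P(\Omega)^{2/3} D(\Omega)^{1/3}}{|\Omega|}.
\]
Cubing both sides and rearranging gives exactly $|\Omega|^3 \leq \frac{27}{4 e_*^3} P(\Omega)^2 D(\Omega)$, establishing the inequality.

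Finally, for the equality case: equality in the displayed inequality is equivalent to $\min_{\lambda>0} \cE(\lambda\Omega)/|\lambda\Omega| = e_*$, i.e., to $\lambda_0 \Omega$ realizing the minimum in \eqref{eq:min-binding}. Since both sides of the target inequality are invariant under dilation of $\Omega$, one may always rescale $\Omega$ to the critical scale $\lambda_0$, at which the characterization reduces to $\cE(\Omega)=e_*|\Omega|$ as stated (i.e., equality holds precisely when $\Omega$ is, up to scaling, a minimizer of the energy per volume ratio). The only subtle point is recording the equality case correctly, but once scale invariance of the inequality is noted there is no real obstacle; the rest is a one-variable calculus computation.
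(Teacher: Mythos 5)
Your proof is correct, and it supplies a complete argument for a lemma the paper itself does not prove (the paper cites Frank--Lieb, Remark 3.6, without reproducing the computation). The one-variable optimization over dilations is the natural route: your critical scale $\lambda_0^3 = P(\Omega)/(2D(\Omega))$ and the minimal value $\tfrac{3}{2^{2/3}}\, P(\Omega)^{2/3}D(\Omega)^{1/3}/|\Omega|$ are both correct, and cubing gives exactly the stated bound. You are also right to flag the equality case as the only subtle point: the condition $\cE(\Omega) = e_*|\Omega|$ is not dilation-invariant, whereas both sides of the displayed inequality scale like $\lambda^9$ under $\Omega\mapsto\lambda\Omega$, so the literal ``if and only if'' can only hold after normalizing to the optimal scale. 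Your reformulation --- equality holds iff some dilation of $\Omega$ attains the infimum in \eqref{eq:min-binding}, equivalently iff $\cE(\lambda_0\Omega) = e_*|\lambda_0\Omega|$ --- is the precise version, and the imprecision is harmless since Corollary \ref{coro:binding-est} uses only the inequality, not the equality characterization.
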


As such, the inequality \eqref{eq:P-D-main-ineq} yields the following estimate.
\begin{coro}\label{coro:binding-est}
$e_* >  (36\pi)^{\frac 13} \approx 3.836$. 
\end{coro}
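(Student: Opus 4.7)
The plan is to combine the strict isoperimetric-type inequality \eqref{eq:P-D-main-ineq} with the rearranged binding-energy inequality in Lemma \ref{lem:estar-ineq}, by evaluating both at a minimizer of the ratio $\cE(\Omega)/|\Omega|$.

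First, I would invoke the existence result of \cite{FrankLieb:compact-least-density} to produce a measurable $\Omega^*\subset\RR^3$ with $0<|\Omega^*|<\infty$ and $\cE(\Omega^*) = e_*|\Omega^*|$. Because $\Omega^*$ is in particular a volume-constrained minimizer of $\cE(\cdot)$ on sets of volume $|\Omega^*|$, the regularity result \cite[Theorem 2.7]{BonaciniCristoferi} (already used in Lemma \ref{lemm:cont-method}) lets us replace $\Omega^*$ by a compact representative with smooth boundary. Since $\cE(\Omega^*)/|\Omega^*| = e_*$ is the minimum of the ratio, the equality case of Lemma \ref{lem:estar-ineq} applies:
\[
|\Omega^*|^3 = \tfrac{27}{4 e_*^{3}}\, P(\Omega^*)^2 D(\Omega^*).
\]

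Next, because $\Omega^*$ is now compact with smooth boundary, Theorem \ref{theo:est-for-R3-LD} supplies the strict inequality
\[
|\Omega^*|^3 < \tfrac{3}{16\pi}\, P(\Omega^*)^2 D(\Omega^*).
\]
Dividing the two displays by the positive quantity $P(\Omega^*)^2 D(\Omega^*)$ yields $\frac{27}{4 e_*^{3}} < \frac{3}{16\pi}$, which rearranges to $e_*^{3} > 36\pi$, as desired.

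The real content of the argument lies upstream: it is the strictness in \eqref{eq:P-D-main-ineq}, obtained from the equality case of H\"older's inequality in the application of Santal\'o's formula, that upgrades the naive bound $e_* \geq (36\pi)^{1/3}$ to the strict statement in the corollary. Within the corollary proper the only delicate step is producing a \emph{smooth compact} set realizing $e_*$; once existence and interior/boundary regularity are cited, the remainder is a one-line comparison of explicit constants.
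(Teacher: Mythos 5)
Correct, and this is precisely the argument the paper leaves implicit (the corollary is stated without a written proof): evaluate the equality case of Lemma \ref{lem:estar-ineq} at a compact smooth minimizer of $\cE(\Omega)/|\Omega|$ — noting, as you do, that such a minimizer is in particular a volume-constrained minimizer of $\cE$, so the Bonacini--Cristoferi regularity applies — and compare against the strict inequality \eqref{eq:P-D-main-ineq} to force $\tfrac{27}{4e_*^3} < \tfrac{3}{16\pi}$, i.e.\ $e_*^3 > 36\pi$. Incidentally, the displayed numerical value $(36\pi)^{1/3}\approx 3.836$ in the paper appears to be a typo for $\approx 4.836$; this does not affect your argument.
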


\begin{rema}
Note that in \cite[Theorem 2.2]{KnupferMuratovNovaga} Kn\"upfer--Muratov--Novaga  derived the lower bound $e_*  \geq\left(\frac{243\pi}{16}\right)^{\frac 13} \approx 3.627$.
\end{rema}
One can compare these estimates to the upper bound
\[
e_* \leq \min_R \frac{\cE(B_R)}{|B_R|} = 3 \left( \frac{9 \pi}{5} \right)^{\frac 13} \approx 5.345
\] 
obtained by optimizing the energy density of round balls.

\bibliographystyle{alpha}
\bibliography{bib}

\end{document}